\newtheorem{theorem}{Theorem}
\newtheorem{lemma}[theorem]{Lemma}
\newtheorem{remark}[theorem]{Remark}
\newtheorem{definition}[theorem]{Definition}
\newtheorem{proposition}[theorem]{Proposition}
\newtheorem{conjecture}[theorem]{Conjecture}
\newcommand{\calN}{\mathcal{N}}
\newcommand{\calS}{\mathcal{S}}
\newcommand{\calV}{\mathcal{V}}
\newcommand{\bbC}{\mathbb{C}}
\newcommand{\bbN}{\mathbb{N}}
\newcommand{\bbR}{\mathbb{R}}
\newcommand{\rmc}{\mathrm{c}}
\newcommand{\set}[2]{\left\{ #1 \,\middle|\, #2 \right\}}
\newcommand{\abs}[1]{\left\lvert #1 \right\rvert}
\newcommand{\norm}[1]{\left\lVert #1 \right\rVert}
\newcommand{\dd}{\,\mathrm{d}}
\title{Cheeger's constant for the Gabor transform and ripples}
\author[R.~Alaifari, B.~Pineau, M.~A.~Taylor and M.~Wellershoff]{Rima Alaifari, Ben Pineau, Mitchell A.~Taylor and Matthias Wellershoff}
\date{\today}
\begin{document}

\begin{abstract}
    We discover a new instability mechanism for short-time Fourier transform  phase retrieval which yields that for any reasonable window function $\phi$ in any dimension $d$, the local stability constant $c(f)$ defined via
     \begin{equation*}
       \inf_{|\lambda|=1}\|f- \lambda g\|_{M^p(\mathbb{R}^{d})}\leq  c(f)\| |V_\phi f|-|V_{\phi} g|\|_\mathcal{D}, \hspace{5mm} \forall g\in M^p(\mathbb{R}^d),
    \end{equation*}
    is \emph{infinite} on a dense set of vectors for all  weighted fractional Sobolev norms $\mathcal{D}$,  up to the sharp maximal regularity level ensuring that the problem is well-defined.  This, in particular, answers an open problem of Rathmair (\emph{LSAA}~2024), who asked whether exponential concentration of the Gabor transform on $\mathbb{R}^2$ guaranteed a finite local stability constant.   For the specific case of Gabor phase retrieval, we further show that there is a complementary dense set where the local stability constant on $\mathbb{R}^{2d}$ is \emph{finite}. In particular, the mapping $f\mapsto c(f)$, sending a function to its local stability constant, is highly discontinuous.   

Our results extend and complement a series of fundamental stability theorems for Gabor phase retrieval which have been proven over the last ten years. Of particular note is the work of Grohs and Rathmair (\emph{Comm.~Pure Appl.~Math.}~2019 for $d=1$ and \emph{J.~Eur.~Math.~Soc.}~2022  for  $d\geq 1$), who showed that for sufficiently strong weighted Sobolev norms $\mathcal{D}$ on $\mathbb{R}^{2d}$, the local stability constant for Gabor phase retrieval is bounded by the inverse of the Cheeger constant of the flat metric conformally
multiplied by $|V_\phi f|$. As a consequence of our analysis, we determine two dense families of functions, one of which has associated Cheeger constant zero and the other strictly positive. We also revisit the stability problem for STFT phase retrieval on bounded subsets of the time-frequency plane, for more general windows, and for restricted signal classes, extending and simplifying many influential results in the literature.

  %for \emph{any} infinite dimensional subspace $E$ of $M^p(\mathbb{R}^d)$, if in the definition of $c(f)$ one only requires $g\in E$,  then $c(f)$ will \emph{still} be infinite for a dense collection of $f\in E$, independent of the choice of norm $\mathcal{D}$.%Although it is well-known that Gabor phase retrieval problem plays a fundamental role in coherent diffraction imagining and its solution unique up to natural identifications, our results for the first time give a definitive proof that local instabilities are omnipresent in all STFT phase retrieval problems, regardless of the promotions of regularity.
\end{abstract}

\maketitle

\section{Introduction}
Fourier phase retrieval is a fundamental problem in signal processing and computational imaging, where the goal is to reconstruct an image $f$ from measurements of its Fourier magnitude $|\mathcal{F}f|$. This problem appears frequently in crystallography, where one wishes to determine the structure of a specimen by sending a beam of $X$-rays at the object and measuring the corresponding diffraction pattern. Mathematically, if we call the image we want to observe $f$, then the detector records $|\mathcal{F}f|^2$, the squared modulus of the Fourier transform of $f$. Our task, then, is to recover $f$ from $|\mathcal{F}f|$, up to physically irrelevant ambiguities.\medskip

Since the Fourier phase retrieval problem is in general extremely ill-posed, practitioners have sought different experimental setups to reduce the ambiguities present when trying to reconstruct a function $f$ from the squared modulus of a measurement operator applied to $f$. In ptychography, a mask $\phi$ is placed in front of the sample, so that the detector now measures $|\mathcal{F}(f\phi)|^2$. The mask is then shifted and the experiment repeated, resulting in the family of measurements $|\mathcal{F}(fT_x\phi)|^2$, where $T_x$ denotes translation by $x$. The rationale for using this modified experimental setup is that it incorporates additional redundancy, which helps filter out possible ambiguities when attempting to recover the true signal $f$. Mathematically, this new setup provides a clear link between the phase retrieval problem and the field of time-frequency analysis, which is what we will exploit in our analysis below. 
\medskip

Given a window function $\phi\in L^2(\mathbb{R}^d)$, recall that the short-time Fourier transform $V_\phi:L^2(\mathbb{R}^d)\to L^2(\mathbb{R}^{2d})$ is defined by
\begin{equation}\label{STFT Def}
V_\phi f(x,\omega)=\mathcal{F}(T_x\overline{\phi}f) (\omega)=\int_{\mathbb{R}^d} f(t)\overline{\phi(t-x)}e^{-2\pi it\cdot \omega}dt, \ \ \text{for}\ x,\omega\in \mathbb{R}^d.
\end{equation}
It is easy to see that $\|V_\phi f\|_{L^2(\mathbb{R}^{2d})}=\|\phi\|_{L^2(\mathbb{R}^{d})} \|f\|_{L^2(\mathbb{R}^{d})}$. In particular, one can stably reconstruct $f$ from $V_\phi f$ for any non-trivial window function $\phi$. The fundamental problem in STFT phase retrieval is to stably reconstruct $f$ from the phaseless measurement $|V_\phi f|^2$. To make this precise, first observe that for any unimodular scalar $\lambda$, we have $|V_\phi f|^2=|V_\phi(\lambda f)|^2$. Thus, it is impossible to distinguish $f$ and $\lambda f$ from phaseless STFT measurements. Fortunately, this is a trivial obstacle to the recovery of $f$, as the global phase has no physical meaning. We therefore define the equivalence relation $f\sim g$ if $f=\lambda g$ for some unimodular scalar $\lambda$. The objective of STFT phase retrieval is to identify situations where this is the only ambiguity, that is, when one may recover the equivalence class $[f]$ from $|V_\phi f|$. In the special case that $\phi$ is the Gaussian, we refer to (\ref{STFT Def}) as the \emph{Gabor transform} $\mathcal{G} = V_\phi$ and to the recovery of $[f]$ from $|\mathcal{G} f|$ as Gabor phase retrieval.\medskip

At the algebraic level, there is a standard method for determining whether the operator $V_\phi$ performs phase recovery. Indeed, recall that the ambiguity function $\mathcal{A}f$  of a square-integrable signal $f$ is defined by $\mathcal{A}f(x,\omega):=e^{\pi ix\cdot \omega} V_ff(x,\omega)$. As is well known, $\mathcal{A}f$ is a continuous function that determines $f$ up to global phase. Moreover, we have the relation 
\begin{equation*}\label{Ambiguity to STFT}
    \mathcal{F}\left( |V_\phi f|^2\right)(\omega,-x)=\mathcal{A}f(x,\omega) \overline{\mathcal{A}\phi(x,\omega)}
\end{equation*}
between the ambiguity function and the phaseless short-time Fourier transform. From this identity, it is easy to see that whenever $\mathcal{A}\phi$ does not vanish too much, we may recover $\mathcal{A}f$ (and hence $[f]$) from $|V_\phi f|^2$. In particular, since the ambiguity function of the $d$-dimensional Gaussian is the $2d$-dimensional Gaussian and the ambiguity function of each of the Hermite functions only vanishes on circles, there are many examples of windows where STFT phase retrieval is possible on the whole of $L^2(\mathbb{R})$.
\medskip

Although the ambiguity function relation gives a method to reconstruct $f$ from $|V_\phi f|^2$, it is notoriously unstable, as it involves division by the ambiguity function of the window, which always decays at infinity and may possibly have many other zeros. To measure the local stability of recovering $f$ from $|V_\phi f|$ up to global phase, we make the following informal definition.
\begin{definition}\label{defn of stability}
    Fix a window function $\phi$, two norms $\mathcal{M}$ and $\mathcal{N}$ and a class of functions $E$ with the property that $\|h\|_\mathcal{N}, \||V_\phi h|\|_\mathcal{M}<\infty$ for all $h\in E$. We say that $V_\phi$ does $(\mathcal{M},\mathcal{N},E)$-local stable phase retrieval at $f\in E$ if there exists a constant $C(f)<\infty$ such that for all $g\in E$ we have
    \begin{equation*}
        \inf_{|\lambda|=1}\|f-\lambda g\|_\mathcal{N}\leq C(f) \| |V_\phi f|-|V_\phi g|\|_{\mathcal{M}}.
    \end{equation*}
    When $E$ is the collection of all functions for which the above quantities are finite, we will abbreviate and speak of $\mathcal{M}\to \mathcal{N}$ stability at $f$.
\end{definition}
To make Definition~\ref{defn of stability} precise, we need to specify the norms $\mathcal{M}$ and $\mathcal{N}$ as well as the admissible class of signals $E$ that $g$ ranges over. In view of the relation $\|V_\phi f\|_{L^2(\mathbb{R}^{2d})}=\|\phi\|_{L^2(\mathbb{R}^{d})} \|f\|_{L^2(\mathbb{R}^{d})}$, the most natural choice of norms would be $\mathcal{M}=L^2(\mathbb{R}^{2d})$ and $\mathcal{N}=L^2(\mathbb{R}^{d})$. However, there is a recent general result \cite{Alharbi2024Locality} that states that for any discrete or continuous frame on an infinite-dimensional Hilbert space, the set of functions with an infinite $L^2$-stability constant is dense. In particular, for any infinite-dimensional closed subspace $E\subseteq L^2(\mathbb{R}^d)$, the set of functions $f\in E$ with \emph{infinite} ($L^2(\mathbb{R}^{2d}$), $L^2(\mathbb{R}^{d})$, $E$)-local stability constant is dense in $E$.
\medskip

Knowing that there is no linear prior $E$ that may be imposed to avoid abundant instability in the $L^2$-norm, we will instead pursue a line of research pioneered by Grohs and Rathmair \cite{Grohs2019Stable,MR4404785}, who argued that local stability could be recovered if one uses weighted Sobolev norms for $\mathcal{M}$ and appropriate modulation norms for $\mathcal{N}$.
\subsection{History}
The study of the stability problem for phase retrieval in infinite-dimensions began with an influential work of Cahill-Casazza-Daubechies \cite{MR3554699} who showed that phase retrieval using a frame is always unstable. To make this precise, let $T:\mathcal{H} \to L^2(\mu)$ be an operator from a Hilbert space $\mathcal{H}$ into $L^2(\mu)$. We say that $T$ does \emph{stable phase retrieval} at $f\in \mathcal{H}$ if there exists a constant $C(f)<\infty$ so that for all $g\in \mathcal{H}$ we have
\begin{equation}\label{LSPR intro}
    \inf_{|\lambda|=1}\|f-\lambda g\|_{\mathcal{H}}\leq C(f)\| |Tf|-|Tg|\|_{L^2(\mu)}.
\end{equation}
The result of Cahill-Casazza-Daubechies \cite{MR3554699} states that whenever $T$ is the analysis operator of a discrete frame and $\dim \mathcal{H}=\infty$ then $\sup_{f\in \mathcal{H}}C(f)=\infty.$ This result was later extended by A.~and Grohs \cite{MR3656501}, who showed that whenever a discrete or continuous frame performs phase retrieval, the recovery map will always be continuous but not uniformly continuous. In fact, it was recently shown in \cite{Alharbi2024Locality} that for any discrete or continuous frame $T:\mathcal{H}\to L^2(\mu)$ defined on an infinite-dimensional Hilbert space we have $C(f)=\infty$ for a dense collection of $f\in \mathcal{H}$.
\medskip

After the above negative results, several distinct lines of research emerged. One major direction pioneered by Calderbank-Daubechies-Freeman-Freeman \cite{calderbank2022stable} has shown that it is possible to perform stable phase retrieval in infinite dimensions for many interesting operators $T$ that do not arise as the analysis operator of a frame. Most notably, through the combined efforts of Christ-P.-T.~\cite{MR4837558} and Freeman-Oikhberg-P.-T.~\cite{MR4800909}, infinite-dimensional subspaces $E$ of $L^2(\mathbb{R}^d)$ were constructed so that the operator $T:=\mathcal{F}|_E : E\to L^2(\mathbb{R}^d)$ satisfies \eqref{LSPR intro} with $\sup_{f\in E} C(f)<\infty.$ This led to a new and exciting line of research that utilized techniques from harmonic analysis, probability, and the geometry of Banach lattices to construct many relevant situations where phase information could be recovered in a stable manner.
\medskip

The second main line of research focused on gaining a deeper understanding of the inherent instabilities present in frame phase retrieval for specific structured frames such as the STFT or wavelet transforms. This has led to the formulation of stable phase retrieval in Definition~\ref{defn of stability}, where one now has the additional freedom to choose the norms $\mathcal{M}$ and $\mathcal{N}$, as well as the class of signals that $g$ ranges over. The first major result in this direction is due to A., Daubechies, Grohs and  Yin \cite{alaifari2019Stable}, who showed that Gabor phase retrieval restricted to bounded domains $\Omega\subseteq \mathbb{R}^2$ is $H^1$-stable. More specifically, when $\phi$ is the Gaussian, we have for any function $f\in L^2(\mathbb{R})$ and any generic bounded domain $\Omega\subseteq \mathbb{R}^2$ that there is a finite constant $C=C(f,\Omega)$ such that
\begin{equation}\label{LSPR bounded domains}
    \inf_{|\lambda|=1}\|\mathcal{G} f-\lambda \mathcal{G} g\|_{L^2(\Omega)}\leq C(f, \Omega)\| |\mathcal{G} f|-|\mathcal{G} g|\|_{H^1(\Omega)}\ \ \text{for all} \ g\in L^2(\mathbb{R}).
\end{equation}
 Note that in the language of Definition~\ref{defn of stability}, the above result corresponds to the case when $\mathcal{M}=H^1(\Omega)$ and $\mathcal{N}$ is the restricted modulation norm $\|f\|_{\mathcal{N}}=\|\mathcal{G} f\|_{L^2(\Omega)}$. Since functions in the image of the Gabor transform are (up to trivial factors) holomorphic, $\mathcal{N}$ is indeed a norm. Moreover, from \eqref{LSPR bounded domains} we are able to recover $\mathcal{G} f$ from $|\mathcal{G} f|$ on $\Omega$, which by analyticity uniquely recovers $f$. Note, however, that the inequality \eqref{LSPR bounded domains} does not contradict the above instability theorems for Gabor phase retrieval for three reasons. First, we are measuring in the $H^1$ norm rather than in the $L^2$ norm. Second, the instability of analytic continuation prevents us from \emph{stably} reconstructing $\mathcal{G} f$ from $\mathcal{G} f|_\Omega$ and third, the constant $C(f,\Omega)$ may decay to zero as $\Omega$ becomes larger.
\medskip

Inspired by the above results, Grohs and Rathmair published two celebrated articles \cite{Grohs2019Stable,MR4404785} showing that with a sufficient loss of regularity, the local stability constant for Gabor phase retrieval could be bounded by the inverse of the Cheeger constant of the flat metric conformally
multiplied by $|V_\phi f|$. More specifically, they derived an inequality of the form
\begin{equation}\label{GR}
    \inf_{|\lambda|=1}\|f-\lambda g\|_{\mathcal{M}^1}\lesssim \left(1+\frac{1}{h(f)}\right)\| |\mathcal{G} f|-|\mathcal{G} g|\|_{\mathcal{M}(f)}\ \ \text{for all} \ g\in\mathcal{M}^1.
\end{equation}
Here, $\mathcal{M}^1$ denotes the modulation norm based on $L^1$, $h(f)$ denotes an appropriate Cheeger constant adapted to $|\mathcal{G} f|$ and $\mathcal{M}(f)$ denotes a weighted first-order Sobolev norm that depends on $f$ in a way we will make precise below. The main message of \eqref{GR} is that by introducing more complicated norms and allowing for regularity loss, one could hope to overcome instabilities in Gabor phase retrieval. In particular, it was argued in \cite{Grohs2019Stable,MR4404785} that computing the Cheeger constant should be computationally tractable. In fact, we have the following conjecture of Rathmair which he presented during his lectures at the research term \emph{Lattice structures in analysis and applications} at ICMAT, Madrid, in 2024 (to access the slides and the conjecture, see \cite{Rathmairconjecture}).
\begin{conjecture}\label{Martin conjecture}
    Let $f\in L^2(\mathbb{R})$ be such that its Gabor transform $\mathcal{G}f$ is exponentially concentrated, i.e., there exists $\epsilon>0$ such that $|\mathcal{G}f|e^{\epsilon |z|}\in L^2(\mathbb{R}^2)$. Then the local stability and Cheeger constants of $f$ should be finite and non-zero.
\end{conjecture}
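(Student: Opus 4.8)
The plan is to reduce all four assertions of Conjecture~\ref{Martin conjecture} to the single statement that the Cheeger constant obeys $h(f)>0$, and then to extract that bound from exponential concentration. Three of the four claims are soft. The finiteness $h(f)<\infty$ follows by testing the Cheeger quotient on one fixed competitor (say a large circle), where the weighted perimeter and both weighted volumes are finite and positive because $|\mathcal{G}f|\in L^2$. The positivity of the optimal local stability constant $c(f)$ from Definition~\ref{defn of stability} is immediate: testing the inequality against any single $g_0\not\sim f$ in the admissible class forces every admissible constant to exceed the positive ratio $\inf_{|\lambda|=1}\|f-\lambda g_0\|_{\mathcal{N}}/\||\mathcal{G}f|-|\mathcal{G}g_0|\|_{\mathcal{M}}$, so $c(f)>0$ (this is the quantitative shadow of the non-uniform continuity in \cite{MR3656501}). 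Finally, the finiteness $c(f)<\infty$ is exactly the content of the Grohs--Rathmair bound \eqref{GR}: once $h(f)>0$ the factor $1+1/h(f)$ is finite, whence $c(f)\lesssim 1+1/h(f)<\infty$. Thus everything hinges on the lower bound $h(f)>0$.

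To attack this I would pass to the Fock-space picture. Up to normalization one writes $|\mathcal{G}f(z)|=|F(z)|\,e^{-\pi|z|^2/2}$ for $z\in\mathbb{R}^2$, with $F$ the entire (Bargmann) transform of $f$. In these coordinates the conformal Cheeger constant adapted to $|\mathcal{G}f|$ reads
\begin{equation*}
 h(f)=\inf_{E}\frac{\displaystyle\int_{\partial E}|\mathcal{G}f|\,\dd\mathcal{H}^1}{\displaystyle\min\!\left(\int_{E}|\mathcal{G}f|^2\,\dd z,\ \int_{E^{c}}|\mathcal{G}f|^2\,\dd z\right)},
\end{equation*}
the infimum running over sets of finite weighted perimeter, the numerator recording arclength in the metric $|\mathcal{G}f|^2$ times the flat metric and the denominator its area. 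Writing $|\mathcal{G}f|^2=e^{-V}$ with potential $V(z)=\pi|z|^2-2\log|F(z)|$, exponential concentration, once upgraded to a pointwise estimate $|\mathcal{G}f(z)|\lesssim e^{-\epsilon'|z|}$ via the submean-value property of $\log|F|$, forces $V(z)\gtrsim 2\epsilon'|z|$ at infinity. So the weighted measure has at least linearly growing potential and exponentially thin tails; the only feature spoiling convexity of $V$ is a family of logarithmic spikes $V\to+\infty$ at the zeros of $F$.

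The lower bound would then be assembled in two moves. First, exponential concentration should confine any near-optimal cut to a bounded core: a weighted Poincaré/isoperimetric estimate for measures with linearly growing potentials (in the spirit of Bobkov's Cheeger inequality for log-concave measures and its Buser-type refinements) controls the tail contribution, so competitors that place most of their boundary far out cannot be close to optimal. Second, the logarithmic spikes do \emph{not} create bottlenecks: around a $k$-fold zero a loop of radius $r$ has weighted perimeter $\sim r^{k+1}$ but encloses weighted volume $\sim r^{2k+2}$, so its Cheeger quotient blows up like $r^{-(k+1)}$ as $r\to0$. Hence the infimum is never driven to zero by shrinking onto the zero set, and on the bounded core $V$ behaves, away from the isolated spikes, like a uniformly convex-type potential. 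Combining the two moves would yield a uniform positive lower bound for the quotient over all admissible $E$, giving $h(f)>0$ and hence $c(f)<\infty$.

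The main obstacle is the one feature exponential concentration does \emph{not} control: ruling out a large-scale bottleneck, that is, a curve along which $|\mathcal{G}f|$ is uniformly small separating two regions of comparable weighted mass. Concentration pins down only the radial decay of $|\mathcal{G}f|$; it says nothing about the in-plane arrangement of the zeros of $F$, whose counting density is $\sim\pi$ per unit area by Jensen's formula but whose alignment is otherwise unconstrained. Should the zeros cluster along a curve, they could depress $|\mathcal{G}f|$ along an entire ``wall'' and drive the quotient toward zero even while the tails remain thin. Closing this gap is the delicate point: one would need either a quantitative off-zero lower bound on $|\mathcal{G}f|$ along any separating curve, phrased in terms of the concentration rate $\epsilon$, or structural control of the zero set forbidding such walls. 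I expect this bottleneck-exclusion to be the crux on which the entire argument stands or falls.
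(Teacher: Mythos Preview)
The conjecture is \emph{false}; the paper disproves it rather than proving it. Your reduction of the four assertions to the single claim $h(f)>0$ is correct, but that claim fails under the hypothesis of exponential concentration, and it fails precisely through the bottleneck mechanism you yourself flag at the end as ``the crux on which the entire argument stands or falls.'' The paper's construction (Proposition~\ref{Main prop} and Theorem~\ref{Instabilities dense theorem}) takes an exponentially concentrated $f$ and perturbs it to $f_\epsilon = f + \sum_{n\geq 1} c_n\, M_{\xi_n} f$ with $|\xi_n|\to\infty$ rapidly and the coefficients chosen in the window $e^{-\epsilon|\xi_n|}\ll |c_n|\ll e^{-\epsilon'|\xi_n|}$ for some $0<\epsilon'<\epsilon$. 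The upper bound ensures that $|\mathcal{G}f_\epsilon|\,e^{\epsilon'|z|}\in L^2$, so the perturbed signal is still exponentially concentrated. The lower bound ensures that on an annulus separating the $n$-th ripple from everything else, the spectrogram is governed only by tails of the other bumps, of size $\lesssim e^{-\epsilon|\xi_n|}$, which is negligible against the weighted mass $\sim|c_n|$ carried by the enclosed ripple. The Cheeger quotient for this cut is therefore $\lesssim e^{-\epsilon|\xi_n|}/|c_n|\to 0$, so $h(f_\epsilon)=0$; flipping the sign of the $n$-th ripple also witnesses directly that the local stability constant at $f_\epsilon$ is infinite.

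Neither of your two moves blocks this. The tail-confinement step, borrowed from log-concave isoperimetry, only limits the weighted \emph{length} a competitor's boundary can accumulate far from the origin; it says nothing against enclosing a distant ripple whose weighted mass, while small in absolute terms, is still much larger than the weighted perimeter of the separating curve. And the local analysis near zeros of $F$ is beside the point: the bottlenecks here are not shrinking loops around zeros but macroscopic curves through regions where $|\mathcal{G}f_\epsilon|$ is small simply because every bump is far away. Exponential concentration constrains only the radial decay away from each individual center of mass, not how those centers may be arranged, so it cannot rule out the wall geometry you anticipated.
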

We remark that although the above conjecture suggests that ``most" well-localized signals should have a finite stability constant, an explicit estimation of $h(f)$ was only offered in \cite{Grohs2019Stable,MR4404785} when $f$ is the Gaussian function.
\medskip

To understand the importance of the inequality \eqref{GR}, we must recall the main sources of instability in phase retrieval. The phase retrieval problem (as well as various other important inverse problems, see \cite{abdalla2025sharp,camunez2025characterization,freeman2025optimal,garcia2025isometric, garcia2025existence}) can be defined in the general setting of Banach lattices, which also offers a complete and simple characterization of \emph{real} stable phase retrieval.
\medskip

Recall that a \emph{vector lattice} is a vector space $X$ equipped with a partial ordering $\leq$ that is compatible with the linear operations and for which $|f|:=f\vee (-f):=\sup\{f,-f\}$ exists for all $f\in X$. A \emph{Banach lattice} is a vector lattice that is also a Banach space and satisfies the compatibility condition
$$|f|\leq |g| \ \ \implies \ \ \ \|f\|\leq \|g\|.$$
It is easy to see that for any measure $\mu$ and compact Hausdorff space $K$, $L^p(\mu)$ and $C(K)$ are Banach lattices in the pointwise (a.e.) ordering. On the other hand, the Sobolev space $W^{1,p}$ is not a Banach lattice in this ordering. We say that a subspace $E$ of a Banach lattice $X$ does $C$-\emph{local stable phase retrieval (SPR)} at $f\in E$ if for all $g\in E$ we have
\begin{equation*}
    \inf_{|\lambda|=1}\|f-\lambda g\|\leq C\| |f|-|g|\|.
\end{equation*}
Note that the above definition generalizes the situation in \eqref{LSPR intro}, which is the special case $E:=T(\mathcal{H})\subseteq L^2(\mu)$. By making slight modifications to the proof of \cite[Proposition 3.4]{bilokopytov2025disjointly} one may establish the following theorem (note that $f \wedge g:= \inf\{f,g\}$).
\begin{theorem}\label{SPR Eugene}
    Let $E$ be a subspace of a real Banach lattice $X$ and $f\in E$. The following are equivalent for a constant $C<\infty$.
    \begin{enumerate}[1.]
        \item $C$-local stable phase retrieval holds at $f\in E$.
        \item There are no functions $g,h \in E$ with $f=g+h$ and $\| |g|\wedge |h|\|< \frac{1}{C}(\|g\|\wedge\|h\|)$.
    \end{enumerate}
\end{theorem}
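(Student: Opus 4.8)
The plan is to reduce the (superficially analytic) phase-retrieval inequality to a purely lattice-theoretic statement about decompositions of $f$, after which the equivalence becomes a tautology. First I would use that $X$ is a \emph{real} Banach lattice, so $|\lambda|=1$ forces $\lambda\in\{-1,+1\}$ and hence
\begin{equation*}
    \inf_{|\lambda|=1}\|f-\lambda g\|=\min\{\|f-g\|,\|f+g\|\}\qquad\text{for every }g\in E.
\end{equation*}
The key device is to reparametrize: for $g\in E$ put $a=\tfrac12(f+g)$ and $b=\tfrac12(f-g)$. Since $E$ is a subspace, $a,b\in E$, we have $f=a+b$ and $g=a-b$, and as $g$ ranges over $E$ the pair $(a,b)$ ranges over \emph{all} decompositions $f=a+b$ with $a,b\in E$. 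Under this substitution $\|f+g\|=2\|a\|$ and $\|f-g\|=2\|b\|$, so the left-hand side equals $2\,(\|a\|\wedge\|b\|)$.

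The crucial computation is to do the same for the right-hand side. I would first record the two elementary vector-lattice identities
\begin{equation*}
    |u+v|\vee|u-v|=|u|+|v|,\qquad |u+v|\wedge|u-v|=\big||u|-|v|\big|,
\end{equation*}
which hold in every vector lattice because they hold pointwise in $\mathbb{R}$ and identities built from $+,\vee,\wedge$ transfer from the scalar case to all Riesz spaces. Subtracting these and using $p\vee q-p\wedge q=|p-q|$ together with $p\wedge q=\tfrac12\big(p+q-|p-q|\big)$ gives
\begin{equation*}
    \big|\,|u+v|-|u-v|\,\big|=(|u|+|v|)-\big||u|-|v|\big|=2\,\big(|u|\wedge|v|\big).
\end{equation*}
Specializing to $u=a,\ v=b$, so that $u+v=f$ and $u-v=g$, and invoking $\|\,|x|\,\|=\|x\|$ in a Banach lattice, I obtain $\big\|\,|f|-|g|\,\big\|=2\,\big\|\,|a|\wedge|b|\,\big\|$.

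Combining the two displays, the stability inequality $\inf_{|\lambda|=1}\|f-\lambda g\|\le C\,\big\|\,|f|-|g|\,\big\|$ becomes, after cancelling the factor $2$, exactly $\|a\|\wedge\|b\|\le C\,\big\|\,|a|\wedge|b|\,\big\|$. Hence statement~1 asserts precisely that this inequality holds for \emph{every} decomposition $f=a+b$ with $a,b\in E$, whereas statement~2 (assuming $C>0$, so that the rearrangement is meaningful) asserts that \emph{no} decomposition $f=g+h$ satisfies the opposite strict inequality $\|g\|\wedge\|h\|>C\,\big\|\,|g|\wedge|h|\,\big\|$. These two assertions are contrapositives of one another, so the equivalence drops out immediately. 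The only genuine content is the reparametrization and the identity $\big|\,|f|-|g|\,\big|=2(|a|\wedge|b|)$; the expected obstacle is merely bookkeeping around the scalar-to-lattice transfer of the two $\vee/\wedge$ identities, which is exactly why the argument is a slight modification of \cite[Proposition~3.4]{bilokopytov2025disjointly}.
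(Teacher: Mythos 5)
Your proof is correct and follows essentially the same route as the paper's source for this result (the cited \cite[Proposition 3.4]{bilokopytov2025disjointly}): the bijective reparametrization $g \leftrightarrow (a,b)=\bigl(\tfrac12(f+g),\tfrac12(f-g)\bigr)$ combined with the standard Riesz-space identities $|u+v|\vee|u-v|=|u|+|v|$ and $|u+v|\wedge|u-v|=\bigl||u|-|v|\bigr|$, which reduce both statements to the single inequality $\|a\|\wedge\|b\|\leq C\,\bigl\||a|\wedge|b|\bigr\|$ over all decompositions $f=a+b$ with $a,b\in E$. The only caveat, which you flag yourself, is the degenerate case $C=0$, where the rearrangement in statement~2 is not meaningful; for $C>0$ the equivalence is exactly the contrapositive bookkeeping you describe.
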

 The above theorem shows that instabilities in real phase retrieval occur at $f$ if and only if $f$ can be partitioned into two pieces that are ``almost disjoint" from each other, i.e., the signal $f$ is almost disconnected. Returning to Gabor phase retrieval, the \emph{Cheeger constant} is an alternative measure of ``disconnectedness" of $\mathcal{G}f$, defined by
\begin{equation*}
    h(f):= \inf \frac{\| \mathcal{G}f\|_{L^1(\partial C)}}{\|\mathcal{G}f\|_{L^1(C)}},
\end{equation*}
where the infimum is taken over all open domains $C\subseteq \mathbb{R}^2$ with smooth boundary satisfying
\begin{equation*}
   \int_C|\mathcal{G}f|\leq \frac{1}{2}\int_{\mathbb{R}^2}|\mathcal{G}f|.
\end{equation*}
With the above discussion in mind, one may informally interpret \eqref{GR} as stating that the only source of instability in Gabor phase retrieval is ``disconnected measurements". Note that \eqref{GR} is not an immediate consequence of Theorem~\ref{SPR Eugene}, as Gabor phase retrieval is a \emph{complex} phase retrieval problem and Theorem~\ref{SPR Eugene} only applies in the real setting (i.e., to sign retrieval problems). Moreover, \eqref{GR} involves norms that are not Banach lattice norms and incurs a loss of regularity. Finally, we remark that although the Cheeger constant also quantifies ``disconnectedness" it does not immediately produce the functions $g,h$ in the image of the Gabor transform that would be needed to yield an instability in Theorem~\ref{SPR Eugene}.  In particular, although \eqref{GR} proves that the Cheeger constant is an upper bound for an appropriate local stability constant, it does not seem to claim that the Cheeger constant also bounds the local stability constant from below.
\subsection{Main results}
We now outline our main results. Our first result shows that no amount of imposed regularity or decay can overcome the inherent instabilities in STFT phase retrieval. In particular, $h(f)$ is infinite for a dense set of $f$ and Conjecture~\ref{Martin conjecture} is false.
\begin{theorem}[Dense instabilities for STFT phase retrieval]\label{Instabilities dense theorem}
 Fix $p,q\in [1,\infty)$, $0\leq s<1+\frac{1}{p}$, $r>0$ and a window function $\phi$. Suppose that $f\neq 0$ is such that 
 $\|V_\phi f\|_{W^{s',p}_{2r}\cap L^q(\mathbb{R}^{2d})}<\infty$ for some $s'>s$.
    Then for any $\epsilon>0$, there exists a function $f_\epsilon$ such that $\|V_\phi f-V_\phi f_\epsilon\|_{W^{s',p}_{r}\cap L^q(\mathbb{R}^{2d})}<\epsilon$ and $V_\phi$ fails to do $W^{s,p}_r\to L^q$ stable phase retrieval at $f_\epsilon$: There is no finite constant $C(f_\epsilon)$ such that 
\begin{equation}\label{theorem in}
    \inf_{|\lambda|=1}\|V_\phi f_\epsilon-\lambda V_\phi g\|_{L^q(\mathbb{R}^{2d})}\leq C(f_\epsilon)\| |V_\phi f_\epsilon|-|V_\phi g|\|_{W^{s,p}_r(\mathbb{R}^{2d})}
\end{equation} 
holds for all $g$ with $\|V_\phi g\|_{W^{s,p}_r\cap L^q(\mathbb{R}^{2d})}<\infty.$ 
\end{theorem}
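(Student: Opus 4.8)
My plan is to realize the instability through a \emph{ripple train}: I replace $f$ by a very rapidly decaying approximation and then add to it infinitely many small, widely separated time--frequency bumps. Reversing the phase of a single bump will be essentially invisible to the magnitude $|V_\phi f_\epsilon|$ in the strong weighted norm $W^{s,p}_r$, yet it will move $V_\phi f_\epsilon$ by a fixed-in-scale amount in the weaker norm $L^q$ that no global phase can repair; equivalently, $|V_\phi f_\epsilon|$ will acquire infinitely many thin necks, its Cheeger constant will be zero, and the bound \eqref{GR} will degenerate. Before adding ripples I would first spend the headroom in the hypothesis --- the gap between $s'$ and $s$ and between the weights $2r$ and $r$ --- to replace $f$, within $\epsilon/2$ in $W^{s',p}_r\cap L^q$, by a function $\tilde f$ whose transform $V_\phi\tilde f$ decays faster than every exponential $e^{-\rho|z|}$ (for instance a finite superposition of time--frequency shifts of a Gaussian, whose short-time Fourier transform is Gaussian-localized). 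This super-exponential decay of the ``main mass'' is precisely what will defeat the weight $e^{r|z|}$ in the crossover estimate below, and it is the reason the hypothesis is stated with the doubled weight $2r$ and with $s'>s$.

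I then set $f_\epsilon=\tilde f+\sum_{n}\delta_n\,\pi(z_n)\gamma$, where $\pi(z_n)$ denotes a time--frequency shift, $\gamma$ is a fixed Gaussian profile, the centres $z_n\in\bbR^{2d}$ march to infinity super-geometrically so that the gaps $|z_{n+1}-z_n|$ dominate $|z_n|$, the centres are chosen where $|V_\phi f|$ is especially small (possible by the weighted integrability of $V_\phi f$), and $\delta_n\simeq\epsilon\,2^{-n}e^{-r|z_n|}$, so that $\sum_n\delta_n\|V_\phi(\pi(z_n)\gamma)\|_{W^{s',p}_r\cap L^q}<\epsilon/2$. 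By covariance $|V_\phi(\pi(z_n)\gamma)|$ is a Gaussian bump centred at $z_n$, so the summands are essentially disjoint. As competitors I take $g_n:=f_\epsilon-2\delta_n\pi(z_n)\gamma$, obtained by flipping the sign of the $n$-th bump.

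The proof then splits into two estimates. For the numerator, write $V_\phi f_\epsilon=S_n+B_n$ with $B_n$ the $n$-th bump, so that $V_\phi f_\epsilon-\lambda V_\phi g_n=(1-\lambda)S_n+(1+\lambda)B_n$; since $S_n$ and $B_n$ are almost disjointly supported and $|1-\lambda|^2+|1+\lambda|^2=4$ forces $|1+\lambda|\geq\tfrac32$ whenever $|1-\lambda|<\tfrac12$, a global phase can align at most one of the two pieces and $\inf_{|\lambda|=1}\|V_\phi f_\epsilon-\lambda V_\phi g_n\|_{L^q}\gtrsim\delta_n\|V_\phi\gamma\|_{L^q}$. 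For the denominator I must avoid the lossy pointwise bound $\big|\,|V_\phi f_\epsilon|-|V_\phi g_n|\,\big|\le |2B_n|$ and instead use the cancellation identity $\big|\,|S_n+B_n|-|S_n-B_n|\,\big|=\tfrac{4|\Re(S_n\overline{B_n})|}{|S_n+B_n|+|S_n-B_n|}\le 4\min(|S_n|,|B_n|)$. This difference is supported only where the $n$-th bump meets its neighbours and the super-exponentially decaying tail of $V_\phi\tilde f$; there the minimum is of order $e^{-c|z_n|^2}$, which beats both the weight $e^{r|z_n|}$ and the numerator scale $\delta_n\simeq e^{-r|z_n|}$. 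To upgrade this weighted $L^p$ bound to the weighted fractional Sobolev norm $W^{s,p}_r$ I would interpolate against a uniform $W^{s',p}_r$ bound on $V_\phi f_\epsilon$ and $V_\phi g_n$ with exponent $1-s/s'$; here the restriction $s<1+\tfrac1p$ is exactly what guarantees that the magnitudes --- which carry conical Lipschitz singularities at the zeros of the transform --- lie in $W^{s,p}_r$, so that the interpolation endpoint is finite. Combining the two estimates yields a ratio $\gtrsim \delta_n\big/e^{-c(1-s/s')|z_n|^2}\to\infty$, so no finite constant $C(f_\epsilon)$ exists; and since $f_\epsilon\to f$, the set of unstable signals is dense.

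The step I expect to be the genuine obstacle is the denominator estimate in the weighted fractional Sobolev norm. The subtlety is twofold: one must control the conical singularities of $|V_\phi f_\epsilon|$ and $|V_\phi g_n|$ at the (generically simple) zeros of the transforms that land in the crossover regions --- this is where the sharpness of the threshold $s<1+\tfrac1p$ is felt --- and one must verify that the super-exponential decay arranged in the reduction step still dominates the exponential weight \emph{after} fractional differentiation, uniformly in $n$. Making the interpolation quantitative enough that the Gaussian gain $e^{-c|z_n|^2}$ survives to the power $1-s/s'$ and continues to overwhelm $\delta_n\simeq e^{-r|z_n|}$ is the crux of the whole argument.
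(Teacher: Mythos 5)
Your overall mechanism is the right one and matches the paper's in spirit (a train of ripples marching to infinity, sign-flip competitors, near-disjointness giving a numerator of bump size versus a denominator controlled by $2\min(|S_n|,|B_n|)$ on crossover regions, and a low/high frequency splitting to absorb the derivative loss using $s<1+\tfrac1p$). However, there is a genuine gap at the very first step: your bumps $\pi(z_n)\gamma$ and the reduction to a $\tilde f$ with super-exponentially decaying transform are only available when $\phi$ is (essentially) the Gaussian. The theorem is stated for an \emph{arbitrary} window $\phi$, whose only property is that the single function $V_\phi f$ has the assumed regularity and decay. For such $\phi$ there is no reason that $V_\phi(\pi(z_n)\gamma)$ lies in $W^{s,p}_r\cap L^q$ at all, and covariance only tells you that $|V_\phi(\pi(z_n)\gamma)|$ is a \emph{translate of} $|V_\phi\gamma|$ --- it is a Gaussian bump only when $\phi$ is Gaussian; likewise the density of finite Gaussian superpositions in the exotic image norm $\|V_\phi(\cdot)\|_{W^{s',p}_r\cap L^q}$ is unjustified. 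The paper sidesteps all of this by manufacturing the ripples from $f$ itself: by the covariance property, modulating $f$ translates $V_\phi f$ in the frequency variable (up to phase), so the bumps $\epsilon_n T_{a_n}V_\phi f$ automatically stay in the image of $V_\phi$ with exactly the assumed norm finiteness. Relatedly, your reading of the doubled weight is off: the norm in $W^{s,p}_r$ carries the \emph{polynomial} weight $\langle z\rangle^r$, not $e^{r|z|}$, and no super-exponential decay is needed or used --- the hypothesis $V_\phi f\in L^p_{2r}$ is spent directly to get tail bounds of the form $2^{-3n}\langle j_n\rangle^{-r}$ on far annuli, which is precisely what beats both the weight and the bump amplitude $2^{-n}\langle j_n\rangle^{-r}$.

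The second concrete problem is an endpoint mismatch in your regularity upgrade. You interpolate $\| |V_\phi f_\epsilon|-|V_\phi g_n|\|_{W^{s,p}_r}$ between $L^p_r$ and a ``uniform $W^{s',p}_r$ bound'' on the moduli with exponent $1-s/s'$, justifying finiteness of the endpoint by $s<1+\tfrac1p$. But the endpoint you invoke lives at regularity $s'$, and the boundedness of $u\mapsto|u|$ on $W^{\sigma,p}$ holds if and only if $\sigma<1+\tfrac1p$; since the theorem allows any $s'>s$ (possibly $s'\geq 1+\tfrac1p$), the $W^{s',p}$ norms of the moduli may simply be infinite. The paper's Littlewood--Paley argument is exactly your interpolation done at a corrected intermediate level: choose $0<\delta\ll1$ with $s+\delta<\min(s',1+\tfrac1p)$, bound the low frequencies by $2^{js}$ times the $L^p$ quantity (which the weighted-$L^p$ proposition makes rapidly small), and the high frequencies by $2^{-j\delta}$ times $W^{s+\delta,p}$ norms, where the absolute value can be removed because $s+\delta<1+\tfrac1p$. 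Note also that the paper's norm \eqref{def of norm} \emph{separates} weight and smoothness, $\|\langle x\rangle^r u\|_{L^p}+\|\langle D_x\rangle^s u\|_{L^p}$, so the frequency splitting is performed on an unweighted piece and your worry about the weight surviving fractional differentiation dissolves. With these two repairs --- bumps built from modulates of $f$ itself, and interpolation at level $s+\delta$ rather than $s'$ --- your argument becomes the paper's; as written, both steps are genuine gaps.
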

%\begin{remark}
 %   Theorem~\ref{Instabilities dense theorem} is a qualitative version of what we will  prove below; our actual result will give a precise quantitative dependence on the parameters in  Theorem~\ref{Instabilities dense theorem} and incorporate integrability loss by involving $L^q$ norms for $q\neq p$ on the left-hand side of \eqref{theorem in}.
%\end{remark}

\begin{remark}
   The definition of stable phase retrieval requires a Lipschitz-type recovery. However, our proof will rule out any sort of local modulus of continuity of the recovery map.
\end{remark}
\begin{remark}
 
     For a general window function $\phi$, it is not clear what regularity functions in the image of $V_\phi$ have. Certainly, if $\phi\in L^2(\mathbb{R}^d)$ then $V_\phi: L^2(\mathbb{R}^d)\to L^2(\mathbb{R}^{2d})$, and one can construct a dense set of functions with infinite $L^2$-stability constant. Moreover, if $\phi$ is Schwartz, then the STFT $V_\phi$ maps Schwartz to Schwartz, so Theorem~\ref{Instabilities dense theorem} will produce a dense collection of functions that fail local stable phase retrieval even with regularity loss. Moreover, in the case of Gabor phase retrieval, our proof will construct a dense family of functions with zero Cheeger constant. On the other hand, if $\phi$ is rough, there is a priori no reason to expect any smoothness on $V_\phi f$, which is why we have phrased Theorem~\ref{Instabilities dense theorem} as saying that whenever $V_\phi f$ has some regularity, we can create an instability at that regularity level with vectors in the image of $V_\phi$ arbitrarily close to $V_\phi f$. Since we know very little about what functions lie in the image of $V_\phi$, this proof utilizes the symmetries of the STFT to produce an instability, and in particular we must choose $E$ to be invariant under the symmetries of the STFT. However, this is not completely necessary. For example, we will sketch how to prove that for any infinite-dimensional closed subspace $E\subseteq L^2(\mathbb{R})$ there is always a dense set of functions with infinite $(H^1(\mathbb{R}^2)$, $L^2(\mathbb{R}), E)$-stability constant for Gabor phase retrieval. In particular, neither a regularity loss nor \emph{any} priori restriction on the signal class $E$ (as long as it is an infinite-dimensional vector space) can remove instability in Gabor phase retrieval on $\mathbb{R}^2$.
\end{remark}
Theorem~\ref{Instabilities dense theorem} is consistent with the main results of Grohs and Rathmair in \cite{Grohs2019Stable,MR4404785}, as the latter results were \emph{conditional}: If $|\mathcal{G}f|$ is well-connected then $f$ does local stable phase retrieval in certain norms. However, to our knowledge, the only non-trivial example in the literature of a function with $h(f)\in (0,\infty)$ is the Gaussian, so it was unclear to what extent functions of the form $|\mathcal{G}f|$ are indeed ``well-connected". On the other hand, Theorem~\ref{SPR Eugene} and its variants have been successfully utilized outside of the realm of STFT phase retrieval to produce infinite-dimensional subspaces of Banach lattices doing stable phase retrieval with stability constant independent of $f$ \cite{calderbank2022stable, MR4837558, MR4800909}. That is, outside of the realm of STFT phase retrieval, one can actually construct real and complex subspaces where all normalized functions ``significantly overlap" and, in fact, $\sup_{f\in E}C(f)<\infty$.
\medskip

Our next main result states that for the specific case of Gabor phase retrieval, we in fact have \emph{stability} on a dense set as well!
\begin{theorem}
    There is a dense set of $f$ so that both the local stability constant for Gabor phase retrieval in the $H^1$-norm and the associated Cheeger constant of $f$ are finite and non-zero. More specifically, let $E=\{f\in L^2(\mathbb{R}) : \mathcal{G} f\in H^1(\mathbb{R}^2)\}.$ Then for a dense set of $f\in E$ there exists a constant $C(f)<\infty$ such that for all $g\in E$ we have
    \begin{equation*}
        \inf_{|\lambda|=1}\| f-\lambda g\|_{L^2(\mathbb{R})}\leq C(f) \||\mathcal{G}  f|-| \mathcal{G}  g|\|_{H^1(\mathbb{R}^2)}.
    \end{equation*}
\end{theorem}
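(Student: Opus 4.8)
The plan is to realize the dense set as the functions whose Bargmann transform is a polynomial, equivalently the finite linear combinations of Hermite functions, and to deduce a finite stability constant from the Grohs--Rathmair bound \eqref{GR} once the Cheeger constant has been shown to be strictly positive. Using the standard identification $|\mathcal{G}f(x,\omega)| = |Bf(z)|\,e^{-\pi|z|^2/2}$ with $z = x+i\omega$ and $Bf$ entire, a polynomial Bargmann transform $Bf = P$ yields the weight $|\mathcal{G}f(x,\omega)| = |P(z)|\,e^{-\pi|z|^2/2}$, which is real-analytic, Gaussian-comparable at infinity, and vanishes only at the finitely many zeros of $P$. Because polynomials are dense both in the Fock space and in the first-order weighted refinement that corresponds to $\mathcal{G}f\in H^1(\mathbb{R}^2)$, this family is dense in $E$; I expect this density step to be routine. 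The two trivial halves of the conclusion are also immediate: the stability constant is nonzero for any $f\neq 0$, and $h(f)<\infty$ since any straight line bisecting the mass of $|\mathcal{G}f|$ has finite boundary integral and hence realizes a finite Cheeger ratio.

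The core of the argument is the lower bound $h(f)>0$, which I would establish by a weighted isoperimetric estimate resting on three observations. First, the zeros of $\mathcal{G}f$ \emph{help} rather than hurt: a circle of radius $\rho$ about a simple zero has boundary integral of order $\rho^2$ while enclosing mass of order $\rho^3$, so cuts contracting onto the zero set have Cheeger ratio growing like $\rho^{-1}$, and a minimizing sequence cannot concentrate there. Second, the Gaussian decay $|\mathcal{G}f|\lesssim (1+|z|)^{\deg P}\,e^{-\pi|z|^2/2}$ makes the mass outside a large ball exponentially small, so any cut separating a fixed fraction of the total mass must carry its separating boundary inside a fixed ball. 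Third, on that ball the weight is continuous and strictly positive off finitely many points, so a compactness and lower-semicontinuity argument excludes a minimizing sequence of macroscopic cuts degenerating to zero ratio. Together these yield $h(f)\geq c(f)>0$. Making this rigorous---in particular the compactness argument in the presence of the integrable logarithmic singularities of $\log|\mathcal{G}f|$ at the zeros of $P$---is the step I expect to be the main obstacle.

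With $h(f)>0$ in hand, \eqref{GR} furnishes a finite constant in its native norms, and it remains to pass to the stated $L^2\to H^1$ inequality. On the left, the modulation space $\mathcal{M}^1$ embeds continuously in $L^2(\mathbb{R})$, so $\|f-\lambda g\|_{L^2}\lesssim\|f-\lambda g\|_{\mathcal{M}^1}$. On the right, the weight defining the Grohs--Rathmair norm $\mathcal{M}(f)$ is comparable to the Gaussian wherever $P\neq 0$, so away from the zeros one has $\mathcal{M}(f)\lesssim H^1$; near each zero one must verify that the mild weight singularity is absorbed by the $H^1(\mathbb{R}^2)$ norm of the modulus difference, which is where the remaining technical care is needed. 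Since \eqref{GR} is phrased for $g\in\mathcal{M}^1$ while the theorem ranges over $g\in E$, I would finish by a density argument: the good functions $g\in\mathcal{M}^1\cap E$ are dense in $E$ and both sides of the desired inequality are continuous in the topology of $E$, so the bound extends from the dense subclass to all of $E$. Alternatively, one can re-run the Grohs--Rathmair phase-propagation argument directly in the $L^2\to H^1$ setting on the Gaussian-comparable weight, bypassing the modulation-space technicality altogether.
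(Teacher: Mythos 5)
Your dense class (polynomial Bargmann transforms, i.e.\ finite Hermite combinations) is the same one the paper uses, but your route --- prove $h(f)>0$ by a weighted isoperimetric argument, then invoke \eqref{GR}, then compare norms --- has a genuine gap at the norm-conversion step, and it is fatal to the stated conclusion. The Grohs--Rathmair right-hand norm $\mathcal{M}(f)$ is \emph{not} ``Gaussian-comparable away from the zeros'': as the paper's remark on \cite[Proposition 4.3]{MR4404785} recalls, the term $\bigl\|\frac{\nabla|\mathcal{G}f|}{|\mathcal{G}f|}H\bigr\|$ is controlled there only at the cost of a polynomially \emph{growing} weight $(1+|\cdot-z_0|^{2d+2})$ centered at the global maximum $z_0$ of $|\mathcal{G}f|$, and with a loss of integrability ($p=2$ is not allowed). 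An unbounded weight cannot be dominated by the unweighted $H^1(\mathbb{R}^2)$ norm, so your claim ``$\mathcal{M}(f)\lesssim H^1$ where $P\neq 0$'' is false --- you are conflating the weight $|\mathcal{G}f|$ appearing in the Cheeger measure with the polynomial weight in the measurement norm --- and no density-in-$E$ argument can repair an inequality that fails term by term. So even granting $h(f)>0$, \eqref{GR} does not yield the unweighted $L^2\to H^1$ statement. Moreover, the one genuinely new input of your plan, the lower bound $h(f)>0$, is exactly the step you leave open (the BV-compactness/lower-semicontinuity argument near the zeros), so as written the proposal proves neither half of the theorem; your heuristics there (boundary $\sim\rho^2$ versus mass $\sim\rho^3$ near a simple zero, exponential tail confinement) are sound but not a proof.

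The paper proceeds quite differently: it never routes through the Cheeger constant or \eqref{GR}, but proves the stability inequality directly in the Fock space with $F_1=p(z)$. The ingredients are the identity $|\nabla|F||=|F'|$ for holomorphic $F$; a Poincar\'e inequality for the measure $|F_1|^p\,d\gamma$, obtained by decomposing $\mathbb{C}$ into a large disk and three overlapping half-planes chosen so that $p(z)e^{-\pi|z|^2}$ is small but non-vanishing off the disk, where the density is log-concave and log-concave measures satisfy Poincar\'e inequalities; control of the problematic term $\bigl\|\frac{\nabla|F_1|}{|F_1|}(|F_1|-|F_2|)\bigr\|$ by placing $p'/p$ in $L^\infty$ (small on the half-planes since $p'$ has lower degree, bounded on the disk after excising the zeros); removal of the zeros \emph{without} derivative loss via a Caccioppoli-type argument rather than a trace inequality; and a quantitative gluing proposition, with connectivity constant $\lambda_{A,B}(f)$, that assembles the local constants into a global one. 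This direct argument is what delivers unweighted $H^1(\mathbb{R}^2)$ control at $p=2$ --- precisely the regime where \eqref{GR} is too lossy --- and the positivity of the Cheeger constant then comes out as a by-product of the analysis rather than serving as an input. Your closing alternative (``re-run the phase-propagation argument directly in the $L^2\to H^1$ setting'') is the right instinct, but it is not a shortcut: it is the entire content of the paper's proof, and the pieces you would need to supply are exactly the half-plane Poincar\'e inequality via log-concavity, the regularity-preserving treatment of the zeros, and the gluing lemma.
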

\begin{remark}
   The stability of Gabor phase retrieval can also be partially propagated to other windows; we will explain this in more detail below.
\end{remark}
The mechanism we introduce to produce an instability for STFT phase retrieval is to attach to $f$ a sequence of ripples ``at infinity", while staying inside the image of the STFT. This will cause the local stability constant to blow up and hence the Cheeger constant to go to zero. We remark that, in some sense, this source of instability is a consequence of the mathematical model and  not the physics. More precisely, we note that the Cheeger constant defined above treats the whole time-frequency plane on an equal footing, whereas practitioners would only be able to take measurements on a compact subset of the time-frequency plane. In particular, instabilities ``at infinity" are not relevant in physical experiments. To conclude this article, we return to the question of whether the instabilities ``at infinity" are the \emph{only} source of instabilities, and show that this is indeed the case, even without regularity loss. More specifically, we will revisit the inequality \eqref{LSPR bounded domains} which establishes $H^1(\Omega)$ to $L^2(\Omega)$ stability on bounded domains and introduce an additional idea that will allow us to remove the derivative loss. 

% Our results are in stark contrast with the following stability theorems: (i) As shown by Alaifari and Grohs (\emph{SIAM J.~Math.~Anal.}~2017), the solution to the Gabor phase retrieval problem (i.e., the case where $\phi$ is the Gaussian)  is unique up to natural identifications and the recovery map $|V_\phi f|\mapsto f/\sim$ is continuous in $L^2$.  (ii) As shown by Alaifari, Daubechies, Grohs and  Yin (\emph{Found.~Comput.~Math.}~2019), for Gabor phase retrieval restricted to bounded domains $\Omega\subseteq \mathbb{R}^2$, $c(f)$ is finite for all functions $f$, even when measured in comparatively weak norms $\mathcal{D}$. (iii) As shown by Grohs and Rathmair (\emph{Comm.~Pure Appl.~Math.}~2019 for $d=1$ and \emph{J.~Eur.~Math.~Soc.}~2022  for  $d\geq 1$), for sufficiently strong norms $\mathcal{D}$ on $\mathbb{R}^{2d}$, the local stability constant for Gabor phase retrieval is bounded by the inverse of the Cheeger constant of the flat metric conformally
%multiplied by $|V_\phi f|$, which was generally believed to be computationally tractable.

\section{Instability: A dense set with infinite local stability constant}
The goal of this section is to prove that instabilities in STFT phase retrieval are inevitable, even if one imposes a loss of regularity and decay. We will begin by showing that no amount of qualitative concentration assumptions on the signal will restore stability; we will later upgrade this result so that it involves both a loss of regularity and decay.
\medskip

We define weighted Sobolev norms on $(\mathbb{R}^n, d\mu)$ via 
\begin{equation}\label{def of norm}
    \|f\|_{W^{s,p}_r(\mathbb{R}^n,d\mu)}:=\| \langle x\rangle^r f\|_{L^p(\mathbb{R}^n, d\mu)}+\| \langle D_x\rangle^sf\|_{L^p(\mathbb{R}^n, d\mu)}.
\end{equation}
When the measure is the Lebesgue measure, we will omit it from the notation. We recall the non-trivial fact \cite{MR1111186,MR2883848, MR1950719, pineau2025optimal} that the map $f\mapsto |f|$ is bounded on $W^{s,p}(\mathbb{R}^n)$ (in the sense that $\| |f|\|_{W^{s,p}(\mathbb{R}^n)}\lesssim \|f\|_{W^{s,p}(\mathbb{R}^n)}$) if and only if $s<1+\frac{1}{p}$. This will serve as a natural regularity threshold below. 
\medskip

To state our first result, we fix a distinguished $\sigma\geq 0$. We write $X^p_{\sigma}\subseteq L^p_\sigma$ to denote a closed subspace of $L^p_{\sigma}$ which is closed under translation with the induced norm 
$$\|f\|_{X_{\sigma}^p}:=\|\langle x\rangle^\sigma f\|_{L^p(\mathbb{R}^d)}.$$
With this notation, we may state the following general instability result.
\begin{proposition}\label{Main prop}
    Let $\sigma\geq 0$ and let $1\leq p<\infty$ and $1\leq q\leq \infty$. Let $X_{\sigma}^p$ be a non-empty closed subspace of $L^p_{\sigma}$ as defined above. Fix a nonzero $h\in X_{\sigma}^p$ with the decay hypothesis $h\in L_{2\sigma}^p\cap L^q_\sigma$. Then for every $\delta>0$ there exists $k\in X_{\sigma}^p\cap L^q$ such that
    \begin{equation}\label{star}
        \|h-k\|_{X_{\sigma}^p\cap L^q}<\delta
    \end{equation}
    and the $X^p_\sigma\to L^q$ SPR constant for $k$ is infinite. More precisely, there is a sequence of functions $k_n\in X_{\sigma}^p\cap L^q$ and an increasing sequence of real numbers $a_n\to \infty$ such that $k_n\to k$ in $X^p_{\sigma}\cap L^q$ and for large enough $n$,
    \begin{equation}\label{starstar}
        \inf_{|\lambda|=1}\|k-\lambda k_n\|_{L^q}> a_n \| |k|-|k_n| \|_{X^p_\sigma}.
    \end{equation}
\end{proposition}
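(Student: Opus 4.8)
The plan is to realize the ``ripples at infinity'' mechanism abstractly, using the only localized objects guaranteed to lie in $X^p_\sigma$, namely far-away translates of $h$ itself (recall $X^p_\sigma$ is closed and translation invariant). Concretely, I would fix points $\ell_n$ with $\abs{\ell_n}\to\infty$ and positive coefficients $c_n$, and set
\[
k := h + \sum_{n=1}^\infty c_n\, T_{\ell_n} h, \qquad k_n := k - 2 c_n\, T_{\ell_n} h,
\]
so that $k_n$ is obtained from $k$ by reversing the phase of its $n$-th ripple. By Peetre's inequality $\langle x\rangle^\sigma \lesssim \langle \ell_n\rangle^\sigma \langle x-\ell_n\rangle^\sigma$ one has $\norm{T_{\ell_n}h}_{X^p_\sigma}\lesssim \langle\ell_n\rangle^\sigma\norm{h}_{L^p_\sigma}$, so whenever $\sum_n c_n\langle\ell_n\rangle^\sigma<\infty$ and $\sum_n c_n<\infty$ the series converges in $X^p_\sigma\cap L^q$; the same estimate will give $\norm{h-k}_{X^p_\sigma\cap L^q}<\delta$ and $k_n\to k$ once the $c_n$ are taken small. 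Writing $k=A+B$ and $k_n=A-B$ with $B:=c_n T_{\ell_n}h$ and $A:=k-B$, the task reduces to bounding the numerator of \eqref{starstar} from below and its denominator from above.

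For the numerator I would localize on two fixed windows: $W_0$ around the origin, where $h$ does not vanish, and $W_n:=\ell_n+W_0$ around $\ell_n$. On $W_0$ the ripple $B$ is a distant tail of $h$ (hence negligible) while $\norm{A}_{L^q(W_0)}$ is bounded below by essentially $\norm{h}_{L^q(W_0)}$; on $W_n$ the roles reverse, with $\norm{B}_{L^q(W_n)}\gtrsim c_n$ and $\norm{A}_{L^q(W_n)}$ negligible. Since $k-\lambda k_n=(1-\lambda)A+(1+\lambda)B$, a phase $\lambda$ bounded away from $1$ is detected on $W_0$ (forcing $\norm{k-\lambda k_n}_{L^q}\gtrsim\abs{1-\lambda}$), while a phase close to $1$ is detected on $W_n$ (forcing $\norm{k-\lambda k_n}_{L^q}\gtrsim c_n$). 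Splitting into the cases $\abs{1-\lambda}\ge c_n$ and $\abs{1-\lambda}<c_n$ then yields $\inf_{\abs{\lambda}=1}\norm{k-\lambda k_n}_{L^q}\ge \gamma c_n$ for a constant $\gamma>0$ depending only on $h$.

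For the denominator I would use the pointwise bound $\bigl|\,\abs{A+B}-\abs{A-B}\,\bigr|\le 2\min(\abs{A},\abs{B})$ and split $\mathbb{R}^d$ into the region near $\ell_n$ and its complement: away from $\ell_n$ I bound by $2\abs{B}=2c_n\abs{T_{\ell_n}h}$, a weighted tail, and near $\ell_n$ I bound by $2\abs{A}\approx 2\abs{h}$, again a tail. The crucial gain is that on each region the weight $\langle x\rangle^\sigma$ is comparable to $\langle\ell_n\rangle^\sigma$ on the relevant support, so trading $\langle x\rangle^\sigma$ against $\langle x\rangle^{2\sigma}$ buys a factor $\langle\ell_n\rangle^{-\sigma}$; this is exactly where the decay hypothesis enters, giving
\[
\norm{\,\abs{k}-\abs{k_n}\,}_{X^p_\sigma}\lesssim \langle\ell_n\rangle^{-\sigma}\,\tau_n,\qquad \tau_n:=\norm{\langle x\rangle^{2\sigma}h}_{L^p(\abs{x}\ge \abs{\ell_n}/2)}\xrightarrow[\abs{\ell_n}\to\infty]{}0,
\]
where the cross terms between distinct ripples are made negligible by taking the $\ell_n$ lacunary; the hypothesis $h\in L^q_\sigma$ is used in the same spirit to guarantee the $L^q$-tails controlling the numerator windows vanish.

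Finally I would fix the parameters inductively. With $a_n=n$, the two competing demands are closeness, $c_n\langle\ell_n\rangle^\sigma\lesssim\delta 2^{-n}$, and instability, $\gamma c_n> n\cdot\langle\ell_n\rangle^{-\sigma}\tau_n$; together they ask for $c_n$ in an interval of the form $\bigl(Cn\langle\ell_n\rangle^{-\sigma}\tau_n/\gamma,\ \delta 2^{-n}/(C'\langle\ell_n\rangle^\sigma)\bigr)$, which is nonempty precisely when $\tau_n<c\,\delta 2^{-n}/n$, and this can always be arranged by pushing $\ell_n$ far enough out, since $\tau_n\to0$ by $h\in L^p_{2\sigma}$. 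I expect this balancing act to be the main obstacle: the weight $\langle x\rangle^\sigma$ amplifies a ripple at $\ell_n$ by $\langle\ell_n\rangle^\sigma$ in the $X^p_\sigma$-norm, which fights closeness, while instability requires the modulus difference to decay faster than the signal difference; the two reconcile only because placing the ripple far out simultaneously buys a compensating factor $\langle\ell_n\rangle^{-\sigma}$ and a vanishing tail, so that the product $\langle\ell_n\rangle^{-\sigma}\tau_n\cdot\langle\ell_n\rangle^{\sigma}=\tau_n$ tends to zero exactly under the second-order decay assumption $h\in L^p_{2\sigma}$.
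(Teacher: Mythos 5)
Your proposal is correct and follows essentially the same route as the paper: ripples built from small weighted far translates of $h$ itself, a sign flip to define $k_n$, a case split on $\lambda$ near/far from $1$ detected on a window at the origin versus a window at the ripple, and the pointwise bound $\bigl|\,|A+B|-|A-B|\,\bigr|\le 2\min(|A|,|B|)$ with the decay hypothesis $h\in L^p_{2\sigma}\cap L^q_\sigma$ trading the weight gain $\langle\ell_n\rangle^{\sigma}$ against the tail factor $\langle\ell_n\rangle^{-2\sigma}\tau_n$, exactly as in the paper's \eqref{masstail}--\eqref{sob}. The only (immaterial) differences are that the paper flips all ripples of index $>n$ rather than just the $n$-th, localizes on annuli $A_n$ rather than fixed windows, and hardwires $c_n=2^{-n}\langle j_n\rangle^{-\sigma}$ with $a_n=2^n$ instead of your inductive interval choice with $a_n=n$.
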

\begin{proof}\renewcommand{\qedsymbol}{}
    In the proof below, we will assume that $h$ has some mass at the origin, i.e., 
    \begin{equation}\label{normalize}
    \min( \|h\|_{L^p(B)}, \|h\|_{L^q(B)})=1, 
    \end{equation}
    where $B$ is the unit ball in $\mathbb{R}^d$. If this is not the case, one can adjust the proof about a point where $h$ has mass. We define a family of disjoint annuli $A_n:=\{j_n\leq |x|\leq 2j_n\}$ where $j_n\geq 1$ is an increasing sequence of integers to be chosen. We will adopt the notation that  
    \begin{equation*}
        \begin{split}
            \|f\|_{X(A_n)}\coloneqq\|\chi_{A_n}f\|_{X_{\sigma}^p}, \ n\in \mathbb{N},\hspace{5mm}\|f\|_{X_{\geq n}}\coloneqq\|\chi_{|x|\geq \frac{j_n}{2}}f\|_{X_{\sigma}^p},
        \end{split}
    \end{equation*}
    where $\chi_S$ denotes the indicator function of $S$. We will also use the notation $\|\cdot\|_{L^q(A_n)}$ and $\|\cdot\|_{L^q_{\geq n}}$ which are defined similarly to the above. We can then choose $j_n$ large enough so that the annuli $A_n$ are pairwise disjoint, and we have
    \begin{equation}\label{masstail}
        \|h\|_{X(A_n)\cap L^q(A_n)}\leq \|h\|_{X_{\geq n}\cap L_{\geq n}^q}\leq 2^{-3n}\langle j_n\rangle^{-\sigma}.
    \end{equation}
Above, by $\langle j_n\rangle^\sigma$ we mean $\langle(j_n,0,\dots ,0)\rangle^\sigma$. Note that \eqref{masstail} ensures that most of the $X^p_{\sigma}\cap L^q$ mass of $h$ is in the ball $B(0,\frac{j_n}{2})$. The reason we get the factor of $\langle j_n\rangle^{-\sigma}$ in the upper bound \eqref{masstail} is thanks to the hypothesis that $h\in X^p_{2\sigma}\cap L^q_\sigma$. The factor of $2^{-3n}$ comes from taking $j_n$ large enough to ensure that
\begin{equation*}
\|h\chi_{|x|\geq\frac{j_n}{2}}\|_{X_{2\sigma}^{p}\cap L_{\sigma}^q}\leq 2^{-3n}.    
\end{equation*}
Hence, in particular, $j_n$ may possibly be very large relative to $2^{3n}$.
\end{proof}

We now define a sequence of functions $\epsilon_n$ by
\begin{equation*}
    \epsilon_n=\frac{2^{-n}}{\langle j_n\rangle^\sigma}\tau_{\frac{3}{2}j_n}h
\end{equation*}
where $\tau_n$ denotes translation by $n$ in the $x_1$ direction. The main bounds for $\epsilon_n$ that we will need to prove instability are given by the following lemma.
\begin{lemma}\label{Lemma main}
    By taking $j_n$ growing sufficiently fast, the sequence $\epsilon_n$ satisfies the bounds 
    \begin{enumerate}
        \item (Global upper bounds). We have
        \begin{equation}\label{gub}
            \|\epsilon_n\|_{X_{\sigma}^p\cap L^q}\lesssim_\sigma 2^{-n} \|h\|_{X_{\sigma}^p\cap L^q},\hspace{5mm}\|\epsilon_n\|_{L^q\cap L^p} = 2^{-n} \langle j_n\rangle^{-\sigma}\|h\|_{ L^q\cap L^p}.
        \end{equation}
        \item (Mass concentration bound on $A_n$). We have
        \begin{equation}\label{mcb}
            \|\epsilon_n\|_{L^q(A_n)} \geq 2^{-n}\langle j_n\rangle^{-\sigma}.
        \end{equation}
        \item (Mass tail bound outside of $A_n$). Let $A_n^c$ denote the complement of $A_n$. Then
            \begin{equation}\label{mtb}
            \|\epsilon_n\|_{X(A_n^c)} \lesssim_\sigma 2^{-4n}\langle j_n\rangle^{-\sigma}.
        \end{equation}
        \item (Small overlap bound). We have
        \begin{equation}\label{sob}
            \sup_{1\leq j<n}\|\epsilon_j\|_{X(A_n)} \lesssim_\sigma 2^{-3n}\langle j_n\rangle^{-\sigma}.
        \end{equation}
In the above, the implicit constants  are universal (i.e., they do not depend on $n$, $h$, or $\epsilon_n$).
    \end{enumerate}
\end{lemma}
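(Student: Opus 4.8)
The plan is to reduce all four estimates to two elementary facts about the translate $\tau_{\frac{3}{2}j_n}h$: the unweighted norms $L^p$ and $L^q$ are translation-invariant, while the weighted prefactor is controlled by the submultiplicativity of the weight (Peetre's inequality) $\langle x+y\rangle^\sigma\lesssim_\sigma\langle x\rangle^\sigma\langle y\rangle^\sigma$, with a constant depending only on $\sigma$. The geometric content is that the shift by $\frac{3}{2}j_n e_1$ places the mass of $h$ (which sits near the origin by the normalization \eqref{normalize}) exactly at the midradius of the annulus $A_n=\{j_n\le|x|\le 2j_n\}$. Consequently the bulk of $\epsilon_n$ lives in $A_n$, whereas everything spilling outside $A_n$ corresponds, after undoing the translation, to the tail $\{|u|\ge\tfrac12 j_n\}$ of $h$, which is negligible by \eqref{masstail}.

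For the global upper bounds (item 1) the unweighted identity is exact: translation invariance gives $\|\epsilon_n\|_{L^q\cap L^p}=2^{-n}\langle j_n\rangle^{-\sigma}\|h\|_{L^q\cap L^p}$. For the weighted piece I change variables $u=x-\frac{3}{2}j_n e_1$ and apply Peetre to obtain $\|\langle x\rangle^\sigma\tau_{\frac{3}{2}j_n}h\|_{L^p}\lesssim_\sigma\langle j_n\rangle^\sigma\|h\|_{X_\sigma^p}$; the factor $\langle j_n\rangle^\sigma$ cancels the $\langle j_n\rangle^{-\sigma}$ in the definition of $\epsilon_n$, leaving $\|\epsilon_n\|_{X_\sigma^p}\lesssim_\sigma 2^{-n}\|h\|_{X_\sigma^p}$, and combining with the (smaller) $L^q$ bound yields item 1. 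This cancellation is the structural heart of the construction: the weighted norm is $O(2^{-n})$ while the unweighted norm is smaller by $\langle j_n\rangle^{-\sigma}$. The concentration bound (item 2) then reduces to noting that for $j_n\ge 2$ the unit ball $B$ is contained in $A_n-\frac{3}{2}j_n e_1$, so translation invariance and \eqref{normalize} give $\|\epsilon_n\|_{L^q(A_n)}=2^{-n}\langle j_n\rangle^{-\sigma}\|h\|_{L^q(A_n-\frac{3}{2}j_n e_1)}\ge 2^{-n}\langle j_n\rangle^{-\sigma}\|h\|_{L^q(B)}=2^{-n}\langle j_n\rangle^{-\sigma}$.

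The tail and overlap bounds (items 3 and 4) are where the decay hypothesis enters. After the substitution $u=x-\frac{3}{2}j_n e_1$, a reverse-triangle-inequality computation shows $A_n^c-\frac{3}{2}j_n e_1\subseteq\{|u|>\tfrac12 j_n\}$; applying Peetre to $\langle u+\frac{3}{2}j_n e_1\rangle^\sigma$ and invoking \eqref{masstail} in the form $\|h\|_{X_{\geq n}}\le 2^{-3n}\langle j_n\rangle^{-\sigma}$ yields $\|\epsilon_n\|_{X(A_n^c)}\lesssim_\sigma 2^{-n}\langle j_n\rangle^{-\sigma}\cdot\langle j_n\rangle^\sigma\cdot 2^{-3n}\langle j_n\rangle^{-\sigma}=2^{-4n}\langle j_n\rangle^{-\sigma}$, so the gain of $2^{-3n}$ over the bulk comes entirely from the tail smallness of $h$. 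For the overlap bound, the translate $\epsilon_j$ with $j<n$ is centered at radius $\frac{3}{2}j_j\ll j_n$, whence $A_n-\frac{3}{2}j_j e_1\subseteq\{|u|\ge\tfrac12 j_n\}$ as soon as $j_n\ge 3 j_{n-1}$ (this is the ``$j_n$ growing sufficiently fast'' hypothesis). The same Peetre-plus-\eqref{masstail} estimate, now with prefactor $2^{-j}\langle j_j\rangle^{-\sigma}$, produces $\|\epsilon_j\|_{X(A_n)}\lesssim_\sigma 2^{-j}2^{-3n}\langle j_n\rangle^{-\sigma}\le 2^{-3n}\langle j_n\rangle^{-\sigma}$ uniformly in $1\le j<n$.

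The main obstacle is not any single estimate but the uniform bookkeeping of the weight factors $\langle j_n\rangle^{\pm\sigma}$: one must check that every use of Peetre's inequality costs only a constant depending on $\sigma$ (never on $n$ or $h$), and that these factors cancel against the normalizing $\langle j_n\rangle^{-\sigma}$ and the $\langle j_n\rangle^{-\sigma}$ built into \eqref{masstail} to leave exactly the claimed powers. A secondary point requiring care is the chain of geometric containments (the shifted annulus contains $B$ in item 2, and falls inside the tail $\{|u|\ge\tfrac12 j_n\}$ in items 3 and 4); each is a short reverse-triangle-inequality argument, but together they dictate how fast $j_n$ must grow, and I must confirm that a single schedule for $j_n$ — large enough to force \eqref{masstail} and also $j_n\ge 3 j_{n-1}$ — makes all four bounds hold simultaneously.
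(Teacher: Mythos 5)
Your proposal is correct and follows essentially the same route as the paper's proof: Peetre's inequality $\langle x+y\rangle^\sigma\lesssim_\sigma\langle x\rangle^\sigma\langle y\rangle^\sigma$ together with translation invariance of the unweighted norms, the containment $B\subseteq A_n-\tfrac{3}{2}j_n e_1$ for \eqref{mcb}, and the inclusions $A_n^{\mathrm c}-\tfrac{3}{2}j_n e_1\subseteq B(0,j_n/2)^{\mathrm c}$ and $A_n-\tfrac{3}{2}j_\ell e_1\subseteq B(0,j_n/2)^{\mathrm c}$ ($\ell<n$) combined with \eqref{masstail} for \eqref{mtb} and \eqref{sob}; your explicit condition $j_n\geq 3j_{n-1}$ simply makes precise the paper's tacit ``growing sufficiently fast'' hypothesis. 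The only (inconsequential) nit is that \eqref{normalize} gives $\|h\|_{L^q(B)}\geq 1$ rather than the equality you wrote in item 2, which does not affect the direction of the inequality you need.
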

\begin{remark}
    The above lemma essentially says that the $\epsilon_n$ are of size $2^{-n}$ in $X_{\sigma}^p\cap L^q$ and are almost disjoint (in the sense that all but a factor of $\langle j_n\rangle^{-\sigma}2^{-3n}$ of their $X$ mass lives in $A_n$). Moreover, the $\epsilon_n$ are almost disjoint from $h$.
\end{remark}
We postpone the proof of this lemma until the end and will first show how Proposition \ref{Main prop} follows. 
\begin{proof}[Proof that  Lemma~\ref{Lemma main} implies Proposition~\ref{Main prop}]
Fix $0<\delta<1$. We will show that we can take $a_n=2^n$ in \eqref{starstar} (but this is not essential). It will be convenient to define $k:= c_n+b_n$ and $k_n:= c_n-b_n$ where $c_n$ and $b_n$ are sequences of functions given by
\begin{equation*}
    c_n=h+\delta\sum_{j\leq n}\epsilon_j
\end{equation*}
and 
\begin{equation*}
    b_n=\delta\sum_{j>n}\epsilon_j.
\end{equation*}
If $\delta$ is small enough, then by \eqref{gub} and geometric summing, we have
\begin{equation*}
\|h-k\|_{X^p_{\sigma}\cap L^q}\lesssim_{\|h\|_{X_{\sigma}^p\cap L^q}} \delta.    
\end{equation*}
    Moreover, it is clear that $k_n\to k$ in $X^{p}_{\sigma}\cap L^q$ as $n\to\infty$. It remains to establish \eqref{starstar}. To this end, we let $\lambda\in\mathbb{C}$ with $|\lambda|=1$. We first claim that for large enough $n$, we have the uniform in $\lambda$ bound 
    \begin{equation}\label{lowerboundlhs}
    \begin{split}
       \|k-\lambda k_n\|_{L^q}&=\|(1-\lambda)c_n+(1+\lambda)b_n\|_{L^q} \gtrsim_\sigma \delta 2^{-n}\langle j_{n+1}\rangle^{-\sigma}.
    \end{split}
    \end{equation}    
To see this, we consider separately the cases where $|\lambda-1|\geq\frac{1}{2}$ and $|\lambda-1|<\frac{1}{2}$. In the first case, we have for $\delta$ small enough,
\begin{equation*}
\|k-\lambda k_n\|_{L^q}\geq \frac{1}{2}\|h\|_{L^q}-2\delta\sum_{j\geq 1}\|\epsilon_j\|_{L^q}\gtrsim \|h\|_{L^q},    
\end{equation*}
where we used the triangle inequality and \eqref{gub}. This more than suffices for large $n$. If $|\lambda-1|<\frac{1}{2}$, we have
\begin{equation*}
\|k-\lambda k_n\|_{L^q(\mathbb{R}^d)}\geq \|k-\lambda k_n\|_{L^q(A_{n+1})}\geq \delta\|\epsilon_{n+1}\|_{L^q(A_{n+1})}-2\|h\|_{L^q(A_{n+1})}-2\delta\sum_{j\neq n+1}\|\epsilon_j\|_{L^q(A_{n+1})}  
\end{equation*}
which from \eqref{masstail}, \eqref{mcb}, \eqref{mtb} and \eqref{sob} can be bounded from below by $\delta 2^{-n}\langle j_{n+1}\rangle^{-\sigma}$ (if $n$ is large enough). This gives \eqref{lowerboundlhs}. On the other hand, we have
\begin{equation*}
    \| |k|-|k_n|\|_{X^p_{\sigma}}=\| |c_n+b_n|-|c_n-b_n| \|_{X_{\sigma}^p}\leq 2\|b_n\|_{X(\cap_{j>n}A_j^c)}+2\sum_{j>n}\|c_n\|_{X(A_j)}.
\end{equation*}
    By geometric summing $2^{-3n}$ and using \eqref{mtb}, it is easy to see that the first term satisfies
    \begin{equation*}
        \|b_n\|_{X(\cap_{j>n}A_j^c)}\lesssim_{\sigma}\langle j_{n+1}\rangle^{-\sigma}2^{-4n},
    \end{equation*}
    while the second term satisfies
    \begin{equation*}
        2\sum_{j>n}\|c_n\|_{X(A_j)}\leq 2\sum_{j>n}\|h\|_{X(A_j)}+2\sum_{j>n}\sum_{k\leq n}\|\epsilon_k\|_{X(A_j)}.
    \end{equation*}
    Using \eqref{masstail}, the first term in the above can be estimated by 
    \begin{equation*}
        2\sum_{j>n}\|h\|_{X(A_j)}\lesssim 2^{-3n}\langle j_{n+1}\rangle^{-\sigma}.
    \end{equation*}
    Using the small overlap bound \eqref{sob}, the second term is bounded by
    \begin{equation*}
        2\sum_{j>n}\sum_{k\leq n}\|\epsilon_k\|_{X(A_j)}\lesssim_{\sigma} \langle j_{n+1}\rangle^{-\sigma}\sum_{j>n}2^{-2j}\leq \langle j_{n+1}\rangle^{-\sigma}2^{-2n}.
    \end{equation*}
    By combining this bound with \eqref{lowerboundlhs}, we obtain \eqref{starstar}. This completes the proof.
\end{proof}
It remains to prove Lemma \ref{Lemma main}.
\begin{proof}
Below, we will use that $\langle x + y \rangle \lesssim \langle x \rangle \langle y \rangle$ for $x,y \in \mathbb R^d$ throughout the proof. We first observe that 
    \begin{equation*}
        \lVert \epsilon_n \rVert_{X_\sigma^p} = 2^{-n} \langle j_n \rangle^{-\sigma} \lVert \langle x + 3 j_n/2\rangle^\sigma \cdot h \lVert_{L^p} \lesssim_\sigma 2^{-n} \left( \frac{\langle 3 j_n / 2\rangle}{\langle j_n \rangle} \right)^\sigma \cdot \lVert h \rVert_{X^p_\sigma} \lesssim_\sigma 2^{-n} \lVert h \rVert_{X^p_\sigma}.
    \end{equation*}
    So, equation~\eqref{gub} follows from the translation invariance of the Lebesgue norms. We now move on to the bound \eqref{mcb}. For this, we simply note that by a translation change of variables and \eqref{normalize}, we have
\begin{equation*}
    \|\epsilon_n\|_{L^q(A_n)}=\frac{2^{-n}}{\langle j_n\rangle^\sigma} \|\tau_{\frac{3}{2}j_n}h\|_{L^q(A_n)}\geq \frac{2^{-n}}{\langle j_n\rangle^\sigma}\|h\|_{L^q(B)}\geq \frac{2^{-n}}{\langle j_n\rangle^\sigma},
\end{equation*}
where in the second last inequality we used that $A_n+\frac{3}{2}j_ne_1$ contains the unit ball. For equations~\eqref{mtb} and \eqref{sob}, we just reuse the argument above for equation~\eqref{gub}, the inclusions $A_n^\mathrm{c} - 3j_n/2 \subset B(0,j_n/2)^\mathrm{c}$ and $A_n - 3 j_\ell/2\subset B(0,j_n/2)^\mathrm{c}$ for $\ell < n$ (as well as equation~\eqref{masstail}).
\end{proof}

Fix a window function $\phi\neq 0$.
The fundamental \emph{covariance property} of the STFT states that  for all $x,u,\omega,\eta\in \mathbb{R}^d$ we have
\begin{equation}
    V_\phi(T_uM_\eta f)(x,\omega)=e^{-2\pi i u\cdot \omega} V_\phi f(x-u,\omega-\eta),
\end{equation}
where $T_u$ is translation by $u$ and $M_\eta$ is modulation by $\eta$. This gives us the invariance needed to apply Proposition~\ref{Main prop}. To prove Theorem~\ref{Instabilities dense theorem}, we now have to incorporate derivative loss.
%If the window function $\phi$ in the Schwartz class $\mathcal{S}$, the STFT $V_\phi$ will map Schwartz functions to Schwartz functions. In particular, there will be many functions $f$ for which $V_\phi f$ has high regularity. On the other hand, if the window $\phi$ has poor regularity, there may be few functions $f$ for which $V_\phi f$ has good smoothness properties. Therefore, in our instability result below, we will assume that $V_\phi f$ has a certain regularity, and given this, we will construct vectors which are arbitrarily close to $f$ and have infinite stability constant for phase retrieval at that regularity level. It is then an immediate consequence that whenever $\phi$ is Schwartz, the set of functions with infinite stability constant will be dense.

\begin{proof}[Proof of Theorem~\ref{Instabilities dense theorem}]
In view of Proposition~\ref{Main prop}, it remains to deal with the regularity loss, which we shall overcome by  using techniques from paradifferential calculus. Indeed, the argument below will allow us to reduce from the top order derivative norm to the lower order norm in a relatively simple way. 
\medskip

 Fix $V_\phi f$ as in Theorem~\ref{Instabilities dense theorem} and $\varepsilon>0$. Using the covariance property of the STFT and the construction in Proposition~\ref{Main prop}, we can create functions $V_\phi f_\epsilon$ as in Theorem~\ref{Instabilities dense theorem} as well as a sequence of functions  $V_\phi f_{\epsilon,k}$ which witness an instability at $V_\phi f_\epsilon$.  Morally speaking, $V_\phi f_\epsilon$ will take the form
    \begin{equation}\label{ansatz}
        V_\phi f_\epsilon=V_\phi f+\sum_{n=1}^\infty \epsilon_n T_{a_n}V_\phi f,
    \end{equation}
where $(\epsilon_n)$ is a sequence of small positive numbers, $(a_n)$ is a sequence of well-chosen positive numbers, and for $a>0$, $T_a V_\phi f(x,\omega)=V_\phi f(x,\omega_1-a,\omega_2,\dots,\omega_d)$ is the translation symmetry induced by modulation in the first coordinate. $V_\phi f_{\epsilon,k}$ is then obtained by flipping signs after the $k$-th bump.
\medskip

Fix $j$ and let $P_j$ denote the standard Littlewood-Paley projection at frequency $2^j$ (see \cite[Appendix A]{tao2006nonlinear} for the basic Littlewood-Paley theory we use below).
With the above notation, we write
\begin{equation*}
 |V_\phi f_{\epsilon,k}|- | V_\phi f_{\epsilon}|=P_{<j}\left(|V_\phi f_{\epsilon,k}|- |V_\phi f_{\epsilon}|\right)+P_{\geq j}\left(|V_\phi f_{\epsilon,k}|- | V_\phi f_{\epsilon}|\right).
\end{equation*}
From this we obtain the estimate
\begin{equation*}
     \||V_\phi f_{\epsilon,k}|- | V_\phi f_{\epsilon}|\|_{{W}^{s,p}}\lesssim 2^{js}\||V_\phi f_{\epsilon,k}|- | V_\phi f_{\epsilon}|\|_{L^{p}}+2^{-j\delta}\left(\||V_\phi f_{\epsilon,k}|\|_{W^{s+\delta,p}}+\||V_\phi f_{\epsilon}|\|_{W^{s+\delta,p}}\right),
\end{equation*}
for any $0<\delta\ll 1$. In the second term, we can kill the absolute value as long as $s+\delta<1+\frac{1}{p}$. By the triangle inequality and the absolute convergence of the sum in \eqref{ansatz} we then get
\begin{equation*}
   2^{-j\delta}\left(\||V_\phi f_{\epsilon,k}|\|_{W^{s+\delta,p}}+\||V_\phi f_{\epsilon}|\|_{W^{s+\delta,p}}\right)\lesssim 2^{-j\delta}\left(\|V_\phi f\|_{W^{s+\delta,p}_r}+\epsilon\right).
\end{equation*}
This can be made small by taking $j$ large, by our assumption that $\|V_\phi f\|_{W^{s',p}_r}<\infty$. Thus, it suffices to control the weighted lower order norm, which we proved in Proposition~\ref{Main prop} can be taken to rapidly converge to zero.% it suffices to show that  we can choose $(\epsilon_n)$ and $(a_n)$ satisfying  \eqref{Constraint} such that 
%\begin{equation}\label{Level of Lp}
  %  \|\langle x\rangle^N\left(|V_\phi f_{\epsilon,k}|- | V_\phi f_{\epsilon}|\right)\|_{L^{p}}\to 0.
%\end{equation}
\end{proof}

\begin{remark}
    In view of the above proof, it should be evident to the reader that similar instability results hold if the STFT is replaced by the wavelet transform.
\end{remark}

\begin{remark}
    Note that our definition of the norm \eqref{def of norm} only incorporates weights on the level of $L^p$. This is not crucial for the above arguments. For example, if we wanted to prove an instability with respect to the norm
     $$\|f\|_{W^{s,p}_r}+\sum_i\|\langle x\rangle^N\partial_{x_i} f\|_{L^p}$$
     instead of $\|f\|_{W^{s,p}_r}$ we would simply assume a higher regularity variant of the conditions in Theorem~\ref{Instabilities dense theorem}. Indeed, with these modifications we could still repeat much of the above argument, as differentiation is translation invariant. On the other hand, to prove the reduction from weighted Sobolev norms to weighted $L^p$ norms, we notice that, by chain rule, we have the norm equivalence
         $$\|f\|_{W^{s,p}_r}+\sum_i\|\langle x\rangle^N\partial_{x_i} f\|_{L^p} \sim_N\|f\|_{W^{s,p}_r}+\sum_i\|\partial_{x_i}\left(\langle x\rangle^N f\right)\|_{L^p}.$$
     If one uses this latter expression of the norm, the Littlewood-Paley argument goes through almost verbatim if one replaces $|V_\phi f_{\epsilon,k}|- | V_\phi f_{\epsilon}|$  by $\langle x\rangle^N\left( |V_\phi f_{\epsilon,k}|- | V_\phi f_{\epsilon}|\right)$ and makes the appropriate absolute convergence assumption for the series \eqref{ansatz} which would follow from higher regularity variants of the conditions in Theorem~\ref{Instabilities dense theorem}.
\end{remark}

So far, we have worked only with $W^{s,p}_r\to L^q$ stability, which is defined by quantifying over all $g$ such that  $V_\phi g\in W^{s,p}_r\cap L^q(\mathbb{R}^{2d})$. 
What if we a priori know that $V_\phi f \in E\subseteq  W^{s,p}_r\cap L^q(\mathbb{R}^{2d})$, and we only want to quantify over $g$ with the same property in our definition of stability at $f$?  This brings us back to the full generality provided by Definition~\ref{defn of stability}.
Since $E$ may not be closed under any sort of symmetry of the STFT, the above argument for producing an instability breaks down. However, since the STFT is a frame, we will still obtain $(L^2(\mathbb{R}^{d}),L^2(\mathbb{R}^{2d}),E)$ instability on a dense set  \cite{Alharbi2024Locality}. Moreover, in the case of the Gabor transform we may combine the argument in \cite{Alharbi2024Locality}    where the compactness of the analysis operator on finite measure sets  is used to produce an $L^2$-almost disjoint sequence with standard elliptic regularity theory to obtain an almost disjoint sequence with respect to a higher regularity (say $H^1$) norm. By the above argument, this  can then be used to create  instabilities in  higher regularity norms  (say $H^1\to L^2$) for the Gabor transform at $\mathcal{G}f_\epsilon\in E$ arbitrarily close to $\mathcal{G}f\in E$, while only needing to quantify over $\mathcal{G}g\in E$ in the definition of stability.  Thus, as long as $E$ is an \emph{infinite-dimensional closed subspace}, instabilities will be omnipresent.
\medskip

On the other hand, there are very interesting examples where $E$ is \emph{not} a subspace, and one can prove stability for frame phase retrieval. In particular, we refer the reader to \cite{MR3554699,freeman2025cahill} for examples of frames for infinite-dimensional Hilbert spaces which perform stable phase retrieval on a relevant class of nonlinear subsets. Such subsets are created by enforcing \emph{quantitative decay} assumptions on the admissible signals by imposing a Besov-style condition at each frequency scale, which prohibits one from sending bumps to infinity without being forced to significantly reduce their norm. This condition rules out the mechanism for instability we observed above and, in many cases, restores stability.

%\footnote{This will involve some RKHS argument or an argument that the analysis operator on compact sets when restricted to $E$ allows one to produce an almost disjoint sequence consisting of elements of $E$.}

\section{Stability: Bounded domains and polynomials}

% Therefore, although the results of Grohs and Rathmair classify, in a sense, all the local instabilities for Gabor phase retrieval, our results  show that such instabilities are nevertheless omnipresent, even in the extreme case when measurements are taken in high regularity weighted Sobolev norms and the signals only need to be recovered in $L^p$. This is the first result of its kind for STFT phase retrieval problems; moreover, our results also apply to the wavelet transform and are equally new in that setting.

% For the specific case of Gabor phase retrieval, we further show that for \emph{any} infinite dimensional subspace $E$ of $M^p(\mathbb{R}^d)$, if in the definition of $c(f)$ one only requires $g\in E$,  then $c(f)$ will \emph{still} be infinite for a dense collection of $f\in E$, independent of the choice of norm $\mathcal{D}$.
We now return to the problem of establishing \emph{stability} for Gabor phase retrieval. From the above results, we know that instabilities are omnipresent on $\mathbb{R}^{2d}$. Nevertheless, we will prove that there are also many functions that  allow for stable recovery on $\mathbb{R}^{2d}$ and that \emph{all} functions allow for stable recovery on bounded subsets of the time-frequency plane.
\medskip

The results in this section are mostly specific to the Gabor transform (i.e., the Gaussian window) but we will also discuss possible extensions to other windows.

\subsection{Bounded domains}

%Since the Cheeger constant was generally believed to be  computationally tractable,  the above connections suggested that local stability in sufficiently strong norms on $\mathbb{R}^{2d}$ for Gabor phase retrieval  should  be achievable. However, our results imply that the Cheeger constant for Gabor phase retrieval is  zero on a dense set, and, moreover, our proof shows that this is also true for general windows $\phi$. 

%Moreover, we do so without derivative loss both on $\mathbb{R}^{2d}$ and bounded domains, improving the aforementioned results of Alaifari, Daubechies, Grohs and  Yin.
%We also explain how to remove derivative loss from previous results and show how to propagate certain stability results for the Gabor transform to a more general class of windows.
For a generic bounded domain $\Omega\subseteq \mathbb{C}$, we now give a simple proof of $H^1(\Omega)\to H^1(\Omega)$ local stability for all $f$. In the next subsection, we will prove a similar result for a dense set of $f$ when $\Omega=\mathbb{C}$. For this purpose, it is convenient to work directly in the Fock space, i.e., view the Gaussian as part of the measure rather than as part of the functions of interest.
\medskip

We start with a standard and elementary argument taken from \cite{alaifari2019Stable,Grohs2019Stable,MR4404785}. The key fact about holomorphic functions that we will use is the identity
\begin{equation}\label{key identity}
    |\nabla |F||=\frac{1}{\sqrt{2}}|\nabla F|=|F'|.
\end{equation}
\begin{remark}
    Note that one can always bound the gradient of the moduli by the moduli of the gradient, but this is not useful in phase retrieval. The converse inequality is true for real-valued functions and for holomorphic functions. It is very useful, as it  allows one to introduce moduli into the argument.

\end{remark}

The first elementary step of the argument is to note that for any domain $\Omega\subseteq \mathbb{C}$ (bounded or unbounded) we have
\begin{equation*}
    \inf_{|\lambda|=1}\|F_2-\lambda F_1\|_{L^p(\Omega)}\lesssim \inf_{\lambda\in \mathbb{C}}\|F_2-\lambda F_1\|_{L^p(\Omega)}+\||F_1|-|F_2|\|_{L^p(\Omega)}.
\end{equation*}
This inequality involves no special properties of the functions. We now write (where $d\gamma$ is the Gaussian measure)
\begin{align*}
   % \begin{split}
        \inf_{\lambda\in \mathbb{C}}\|F_2-\lambda F_1\|_{L^p(\Omega)}=\|\frac{F_2}{F_1}-\lambda \|_{L^p(\Omega, |F_1|^pd\gamma)}&\leq C_{poinc}(\Omega,|F_1|^pd\gamma)\|\nabla\left(\frac{F_2}{F_1}-\lambda \right)\|_{L^p(\Omega, |F_1|^pd\gamma)}
        \\
        &=C_{poinc}(\Omega,|F_1|^pd\gamma)\|\nabla\left(\frac{F_2}{F_1}\right)\|_{L^p(\Omega, |F_1|^pd\gamma)},
   % \end{split}
\end{align*}
i.e., we remove the phase by passing to the level of derivatives. We are now able to apply \eqref{key identity} to bound the above by something only involving moduli. Indeed, inserting \eqref{key identity} and using the quotient rule and the triangle inequality, we may bound
\begin{align*}
   % \begin{split}
        &\inf_{|\lambda|=1}\|F_2-\lambda F_1\|_{L^p(\Omega)} \nonumber \\
        &\lesssim C_{poinc}(\Omega,|F_1|^pd\gamma)\left(\||F_1|-|F_2|\|_{L^p(\Omega)}+\|\nabla|F_1|-\nabla|F_2|\|_{L^p(\Omega)}  +\big\|\frac{\nabla |F_1|}{|F_1|}\left(|F_1|-|F_2|\right)\big\|_{L^p(\Omega)}\right). \label{elem stab estimate}
  %  \end{align}
\end{align*}
The first two terms above are harmless. The last term has a bad balance of derivatives and needs to be handled in a case-by-case manner. Note first that, for the case of a Gaussian, i.e., $F_1=$ constant, this term is completely harmless.
\medskip

From the above, there are three things left to do if one wishes to prove stability for Gabor phase retrieval on $\Omega$:
\begin{enumerate}
    \item Justify that $C_{poinc}(\Omega,|F_1|^pd\gamma)<\infty$.
    \item Handle the third term  above, i.e., control $\big\|\frac{\nabla |F_1|}{|F_1|}\left(|F_1|-|F_2|\right)\big\|_{L^p(\Omega)}$.
    \item Minimize the derivative loss, e.g., replace $\inf_{|\lambda|=1}\|F_2-\lambda F_1\|_{L^p(\Omega)}$ by $\inf_{|\lambda|=1}\|F_2-\lambda F_1\|_{W^{1,p}(\Omega)}$.  %by a higher order norm or the RHS by a lower order norm\footnote{Replacing the RHS by a lower order norm won't be easy, but below we will justify making the LHS stronger.}.
\end{enumerate}
Let us first consider the case studied in \cite{alaifari2019Stable} where we have a bounded domain $\Omega$ and we assume that $F_1$ has $n$-zeros on $\Omega$ and none on the boundary. If $n=0$, then obviously $|F_1|^pd\gamma\sim d\gamma$, so the Poincar\'e inequality is justified, and to handle the third term above, we can just place $\frac{\nabla |F_1|}{|F_1|}$ in $L^\infty.$ By a simple inductive strategy, it now suffices to consider the case of one zero, and show that that zero is harmless.
Let this zero be at $z_0$ and consider a small ball $B_\epsilon(z_0)$. We have 
\begin{equation}\label{zeros dont matter}
    \inf_{|\lambda|=1}\|F_2-\lambda F_1\|_{L^p(\Omega)}\leq  \inf_{|\lambda|=1}\left(\|F_2-\lambda F_1\|_{L^p(B_{\epsilon}(z_0))}+\|F_2-\lambda F_1\|_{L^p(\Omega\setminus B_{\epsilon}(z_0))}\right).
\end{equation}
For the first term, we use the following standard proposition.
\begin{proposition}
    Suppose that $D\subseteq \mathbb{C}$ is simply connected and $F$ is holomorphic and bounded on a neighborhood of $D$. Then 
    $$\|F\|_{L^p(D)}\lesssim \|F\|_{L^p(\partial D)}.$$
\end{proposition}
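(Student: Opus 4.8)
The plan is to exploit the \emph{subharmonicity} of $|F|^p$ and to compare the area integral over $D$ with the boundary integral over $\partial D$ by testing against the torsion function of $D$. First I would record the key fact that, since $F$ is holomorphic, $\log|F|$ is subharmonic, and since $t\mapsto e^{pt}$ is convex and increasing, the composition $u:=|F|^p=\exp(p\log|F|)$ is subharmonic for every $p\geq 1$; equivalently, $\Delta u\geq 0$ in the sense of distributions (a nonnegative measure). Because $F$ is holomorphic and bounded on a neighborhood of $\overline D$, the function $u$ is continuous on $\overline D$, so both integrals in the statement are finite.

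Next I would introduce the torsion function $\phi$ of $D$, i.e.\ the solution of $-\Delta\phi=1$ in $D$ with $\phi=0$ on $\partial D$. Since $-\Delta\phi=1>0$, the maximum principle gives $\phi\geq 0$ in $D$, and Hopf's lemma gives $\partial_n\phi\leq 0$ on $\partial D$ (with $n$ the outward normal); as $\partial D$ is smooth (it is a ball in the application), $\phi$ extends smoothly to $\overline D$ and $C(D):=\|\partial_n\phi\|_{L^\infty(\partial D)}<\infty$. Applying Green's second identity to the pair $(u,\phi)$ and using $\Delta\phi=-1$ together with $\phi|_{\partial D}=0$, I obtain
\begin{equation*}
    \int_D u\,dA=\int_{\partial D}u\,(-\partial_n\phi)\,ds-\int_D\phi\,\Delta u\,dA.
\end{equation*}
Now the sign structure does all the work: $-\partial_n\phi\geq 0$ on $\partial D$, while $\phi\geq 0$ and $\Delta u\geq 0$, so the last term is nonpositive and may be discarded. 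This yields
\begin{equation*}
    \int_D|F|^p\,dA\leq\int_{\partial D}|F|^p\,(-\partial_n\phi)\,ds\leq C(D)\int_{\partial D}|F|^p\,ds,
\end{equation*}
which is exactly $\|F\|_{L^p(D)}\lesssim\|F\|_{L^p(\partial D)}$ after taking $p$-th roots.

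The one point requiring care, which I expect to be the main (though routine) obstacle, is that $u=|F|^p$ need not be $C^2$ at the zeros of $F$, so Green's identity cannot be applied verbatim. The clean way around this is to interpret $\Delta u$ as the nonnegative Riesz measure of the subharmonic function $u$, so that $\int_D\phi\,\Delta u\geq 0$ is automatic from $\phi\geq 0$; alternatively one mollifies $u$ by a decreasing family of smooth subharmonic functions $u_\varepsilon\downarrow u$ with $\Delta u_\varepsilon\geq 0$, applies the identity to each smooth $u_\varepsilon$, and passes to the limit using the continuity of $u$ up to $\overline D$ and dominated convergence. Either route preserves the crucial inequality $-\int_D\phi\,\Delta u\,dA\leq 0$.

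Finally I would remark that simple connectivity is not essential to this argument: it works for any bounded $D$ with, say, $C^{1,\alpha}$ boundary, and in the application $D$ is a ball $B_\varepsilon(z_0)$. If one prefers to use simple connectivity directly, one can instead pull $F$ back by a Riemann map $\psi\colon\mathbb{D}\to D$ and reduce to the unit disk, where the harmonic majorant of $|F|^p$ is its Poisson integral and the required estimate follows from the finiteness of $\int_{\mathbb{D}}P(z,\zeta)\,dA(z)$ uniformly in $\zeta\in\partial\mathbb{D}$; this last integral is precisely $-\partial_n\phi(\zeta)$ for the disk, so the two approaches coincide.
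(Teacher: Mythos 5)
Your proof is correct, but note that the paper itself offers no proof at all: the proposition is invoked as a ``standard'' fact (it is only ever applied with $D$ a small disk $B_\epsilon(z_0)$), so there is no argument of the authors' to compare against line by line. Your torsion-function argument is a sound and complete justification: the identity
\begin{equation*}
    \int_D u\,dA=\int_{\partial D}u\,(-\partial_n\phi)\,ds-\int_D\phi\,d(\Delta u)
\end{equation*}
for $u=|F|^p$ continuous and subharmonic on a neighborhood of $\overline D$ is exactly the integrated Riesz decomposition $u=h-G\mu$ (with $h$ the harmonic extension of $u|_{\partial D}$ and $\phi(w)=\int_D G_D(z,w)\,dA(z)$), and your two suggested regularizations at the zeros of $F$ both work; the mollification route even gives uniform convergence on $\overline D$ by Dini's theorem, so the boundary term passes to the limit without fuss. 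You are also right that simple connectivity is a red herring and that some boundary regularity is implicitly needed for $L^p(\partial D)$ to make sense -- a gap in the statement, not in your proof. Two remarks on economy. First, since the application only concerns disks, the intended ``standard'' argument is presumably the one-line special case of yours: subharmonicity of $|F|^p$ makes the spherical means $r\mapsto\frac{1}{2\pi r}\int_{\partial B_r(z_0)}|F|^p\,ds$ nondecreasing, whence
\begin{equation*}
    \int_{B_\epsilon(z_0)}|F|^p\,dA=\int_0^\epsilon\!\!\int_{\partial B_r(z_0)}|F|^p\,ds\,dr\leq \epsilon\int_{\partial B_\epsilon(z_0)}|F|^p\,ds;
\end{equation*}
your torsion function specializes to $\phi(z)=(\epsilon^2-|z-z_0|^2)/4$ with $-\partial_n\phi=\epsilon/2$, so your constant $C(D)$ reproduces this (correctly scale-dependent) bound. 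Second, a cosmetic point: Hopf's lemma is overkill for $\partial_n\phi\leq 0$, which already follows from $\phi\geq 0$ in $D$ and $\phi=0$ on $\partial D$; you only need Hopf if you want strictness, which the estimate never uses. What your more general route buys is the proposition for arbitrary bounded domains with (say) $C^{1,\alpha}$ boundary, with the explicit constant $\|\partial_n\phi\|_{L^\infty(\partial D)}$, which is strictly more than the paper needs but is a clean strengthening.
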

This proposition allows us to place the first term in \eqref{zeros dont matter} on the boundary of $B_{\epsilon}(z_0)$. Then, one has several options. One option is to go to $W^{1,p}$ on a small  annulus with inner radius $\partial B_{\epsilon}(z_0)$ by the trace inequality.  This was done in previous works, but it is quite wasteful, as it neglects the fact that our functions are holomorphic. Instead, it is better to apply a Cacciopoli argument to not lose regularity. From this one deduces that % \footnote{This basically says that for harmonic functions, the, say, $W^{1,2}$ norm on a ball of radius $R$ is bounded from above by the $L^2$ norm on a ball of radius $R+\epsilon$, i.e., by expanding the domain, you can replace derivative norms with $L^2$ norms. Here, we are in a situation where expanding the annulus we got from trace theorem slightly to remove the derivative loss is allowed (this is why there is an $\epsilon/2$ radius in \eqref{zdm}). Note that such an argument wouldn't be possible if the zero were on $\partial\Omega$, as there would be no room to average.} 
\begin{equation}\label{zdm}
    \inf_{|\lambda|=1}\|F_2-\lambda F_1\|_{L^p(\Omega)}\lesssim \inf_{|\lambda|=1} \|F_2-\lambda F_1\|_{L^p(\Omega\setminus B_{\frac{\epsilon}{2}}(z_0))},
\end{equation}
which reduces to the case of no zeros. One can similarly use this to upgrade regularity on the left-hand side of the inequality. Indeed, one may write
\begin{equation*}
    \|\nabla F_2-\lambda \nabla F_1\|_{L^p(\Omega)}=  \| F_2'-\lambda  F_1'\|_{L^p(\Omega)}\lesssim\|F_2'-\lambda F_1'\|_{L^p(\Omega\setminus B_{\frac{\epsilon}{2}}(z_0))}\approx\|\nabla F_2-\lambda \nabla F_1\|_{L^p(\Omega\setminus B_{\frac{\epsilon}{2}}(z_0))}
\end{equation*} to reduce to the case where $F_1$ does not vanish. Then, one may write
\begin{equation*}
    \|\nabla F_2-\lambda \nabla F_1\|_{L^p(\Omega\setminus B_{\frac{\epsilon}{2}}(z_0))}=\|\nabla\left(\left(\frac{F_2}{F_1}-\lambda\right)F_1\right)\|_{L^p(\Omega\setminus B_{\frac{\epsilon}{2}}(z_0))}.
\end{equation*}
If the derivative falls on the first term, we reduce to the case studied above. Otherwise, the derivative falls on the second term, in which case we may write
$$\left(\frac{F_2}{F_1}-\lambda\right)\nabla F_1=\left(\frac{F_2}{F_1}-\lambda\right)\frac{\nabla F_1}{F_1}F_1$$
and place $\frac{\nabla F_1}{F_1}$ in $L^\infty$ as before. This now removes the derivatives and allows us to repeat the above argument, justifying objective (3) above.%, showing that additional the derivative. This argument  shows why zeros don't matter at all for stability of Gabor phase retrieval.
\begin{remark}
    It is instructive to compare the above argument with the previous approaches from \cite{alaifari2019Stable,Grohs2019Stable,MR4404785}. As emphasized in \cite{Grohs2019Stable}, in the case where $\mathcal{G}f$ is a Gaussian and $\Omega=B_R(0)$ is the ball centered at zero of radius $R$, the stability bounds in \cite{alaifari2019Stable} degenerate like $e^{-\pi R^2}$. This is because in \cite{alaifari2019Stable} the authors do not ``cancel" the Gaussian in $\frac{F_1}{F_2}$ and instead work with  $\frac{\mathcal{G}f_1}{\mathcal{G}f_2}$ when executing the above argument. They then individually estimate the numerator and denominator (from above and below, respectively) causing an exponential loss in $R^2$. This is rectified somewhat in \cite{Grohs2019Stable,MR4404785}, though these papers also treat the problematic term $\big\|\frac{\nabla |F_1|}{|F_1|}\left(|F_1|-|F_2|\right)\big\|_{L^p(\Omega)}$ quite differently. Indeed, in our analysis, we remove the zeros of $F_1$ and simply place $\frac{\nabla |F_1|}{|F_1|}$ in $L^\infty.$ In particular, in the Gaussian case where $F_1=1$, this does not cause any losses in $R$ (in fact, the problematic term completely vanishes). On the other hand, in \cite{Grohs2019Stable,MR4404785} the authors try to directly estimate  for a general function $H$,

    $$\big\| \frac{\nabla |F_1|}{|F_1|}H\big\|_{L^p(\Omega)}.$$
   Here, it is instructive to consider the case when $F_1$ is a polynomial, as below we will show that such functions allow for stable phase recovery on $\mathbb{C}$. In this case, we write $F_1(z)=(z-z_1)\cdots (z-z_n)$, where the $z_i$ are the zeros of $F_1$. Each of these zeros leads to a pole in $\left|\frac{\nabla |F_1|}{|F_1|}\right|=\left|\frac{\nabla F_1}{F_1}\right|$, and the question is how to estimate the contributions of these poles. In \cite[Proposition 4.3]{MR4404785}, the authors  ``quantify" the contribution of the poles by utilizing a weighted norm $\mathcal{M}(f)$ as in \eqref{GR}, resulting in an estimate of the form

  $$\big\| \frac{\nabla |\mathcal{G}f|}{|\mathcal{G}f|}H\big\|_{L^p(\Omega)}\lesssim \|(1+|\cdot-z_0|^{2d+2})H\|_{L^q(\Omega)},$$
  where $z_0$ is the maximum of $|\mathcal{G}f|$ \emph{on all of $\mathbb{C}$} and the indices are chosen appropriately (in particular, $p=2$ is \emph{not} allowed and contrary to our argument a significant loss of integrability occurs). Here, the main point is that the parameter $z_0$ depends on the maximum of $|\mathcal{G}f|$ not on $\Omega$, but on all of $\mathbb{C}$. In particular, when $\Omega=B_1(0)$ and the $(z_i)_{i=1}^n$ are chosen in $\Omega$, the maximum will likely be \emph{far} from $\Omega$, resulting in a large weight in the above norm. Similarly, when $\mathcal{G}f$ is a shifted Gaussian and $\Omega=B_1(0)$, the stability constant will  blow up (or more precisely the norm $\mathcal{M}(f)$ will grow as the authors hide the non-Cheeger portion of the stability constant in this norm) as the Gaussian shifts to infinity.

\end{remark}

\subsection{Polynomials on the complex plane}

We now move to the case of the whole space but with $F_1=p(z)$, a polynomial. By a gluing argument which we will present below and the above stability results on bounded domains, it suffices  to justify the above conditions for a half-plane that is sufficiently away from the ``bulk" of $p(z)$. Indeed, we partition the complex plane into a large disk or radius $R$, together with three half-planes so that all subsets significantly overlap, and such that $p(z)e^{-\pi|z|^2}$ is very small but non-vanishing outside of the main disk.
\medskip

To handle (2)  we can write $\left|\frac{\nabla F_1}{F_1}\right|=\left|\frac{p'}{p}\right|$, which can be chosen to be small on the half space as $p'$ is a lower order polynomial than $p$, so this term is harmless and can be placed in $L^\infty$. This also takes care of (3), when combined with the above arguments. For (1), we choose our half-planes for the polynomial to decay relative to the Gaussian measure so that we have log concavity, extend the density $V$ in the definition of log-concavity by $\infty$ (i.e., extend the measure by $0$) and use the fact that log-concave measures  on $\mathbb{R}^d$ have Poincar\'e inequalities. This allows us to deduce a Poincar\'e inequality on the half-space, and hence stability for Gabor phase retrieval on the half-space when combined with the arguments from the previous subsection.
%\medskip

%\begin{remark}
 %   With two competing dense sets, it is tempting to ask which set is ``bigger", or at least which occurs most in practice. 
%\end{remark}
%We remark that it is entirely unclear whether there are \emph{any} functions $f$ with a finite $L^2$-$L^2$ stability constant. It may be worth remarking that this problem also occurs in the sampled phase retrieval problem. More precisely, it is know that there is \emph{no} lattice $\Lambda$ in $\mathbb{R}^2$ with the property that, for all $f,g\in L^2(\mathbb{R})$, if $|\mathcal{G}f|$ and $|\mathcal{G}g|$ agree on $\Lambda$ then $[f]=[g]$. However, if one is also given the restrictions of the gradients of $|\mathcal{G}f|$ and $|\mathcal{G}g|$ to $\Lambda$ then one can very often deduce that $[f]=[g].$ In particular, gradient information is needed to make the sampling problem on a lattice well-posed.

%Questions on which dense set is bigger, or at least which occurs most in practice. Only reasonable things we could want is a Cheeger/stability constant that weights low frequency more than high frequency or priors that are nonlinear, e.g. concentration to fight the bad constant on bounded domains.

\subsubsection{Gluing argument}
In the above argument, we constructed open subsets of the complex plane which ``significantly overlap" and for which one has stability for Gabor phase retrieval at $\mathcal{G}^{-1}p$, where $p$ is a polynomial in the Fock space. We now systematically show that one may ``glue" these regions together in a way that maintains local stability. For this, we introduce for an open set $\Omega\subseteq \mathbb{C}$ and $r\geq 0$ the notation,
\begin{equation*}
    c_{r,\Omega}(f):=\sup_{g\in L^2(\mathbb{R})} \frac{   d_\Omega(f,g)}{\| |\mathcal{G}f|-|\mathcal{G}g|\|_{H^1_r(\Omega)}},
\end{equation*}
where 
\begin{equation*}
    d_\Omega(f,g):=\inf_{|\lambda|=1}\|\mathcal{G}f-\lambda\mathcal{G}g\|_{L^2(\Omega)}.
\end{equation*}
If $\Omega\subseteq \mathbb{C}$ is a domain and $A,B\subseteq \Omega$ are non-empty open subsets such that $\Omega=A\cup B$ we define the $f$-\emph{connectivity} of $A$ and $B$ as
\begin{equation*}
    \lambda_{A,B}(f):=\frac{\|\mathcal{G}f\|_{L^2(A\cap B)}}{\|\mathcal{G}f\|_{L^2(A)}+\|\mathcal{G}f\|_{L^2( B)}}>0.
\end{equation*}
Our gluing result may be stated as follows.
\begin{proposition}
    Let $\Omega\subseteq \mathbb{C}$ be a domain and let $A,B\subseteq \Omega$ be non-empty open subsets such that $A\cup B=\Omega$. Let $r\geq 0$ and fix $f\in L^2(\mathbb{R})$. Then
    \begin{equation*}
        c_{r,\Omega}(f)\leq \left(c_{r,A}(f)^2+c_{r,B}(f)^2\right)^\frac{1}{2}\left(\lambda_{A,B}(f)^{-1}+\sqrt{2}\right).
    \end{equation*}
\end{proposition}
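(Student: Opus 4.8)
The plan is to decouple the two genuinely different difficulties hidden in the statement: the \emph{metric} estimate on each piece (which is exactly what $c_{r,A}(f)$ and $c_{r,B}(f)$ encode) and the \emph{phase} bookkeeping needed to reconcile the locally optimal global phases on $A$ and on $B$ into a single phase on $\Omega$. Writing $d_A(f,g),d_B(f,g)$ for the analogues of $d_\Omega(f,g)$ with $\Omega$ replaced by $A,B$, I would first isolate the purely geometric reduction
\begin{equation*}
 d_\Omega(f,g)\le\Big(\lambda_{A,B}(f)^{-1}+\sqrt2\Big)\sqrt{d_A(f,g)^2+d_B(f,g)^2}\qquad\text{for all }g\in L^2(\mathbb{R}).
\end{equation*}
Granting this, the proposition follows quickly. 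We may assume $c_{r,A}(f),c_{r,B}(f)<\infty$, as otherwise the right-hand side is infinite. By definition of the local constants, $d_A(f,g)\le c_{r,A}(f)\,\||\mathcal{G}f|-|\mathcal{G}g|\|_{H^1_r(A)}$ and likewise for $B$; since $A,B\subseteq\Omega$ and the weighted Sobolev norm is an integral of a nonnegative density and hence monotone in the domain, each localized norm is $\le\||\mathcal{G}f|-|\mathcal{G}g|\|_{H^1_r(\Omega)}$. A Cauchy--Schwarz step then yields $\sqrt{d_A^2+d_B^2}\le\sqrt{c_{r,A}(f)^2+c_{r,B}(f)^2}\,\||\mathcal{G}f|-|\mathcal{G}g|\|_{H^1_r(\Omega)}$, and dividing and taking the supremum over $g$ gives the claim.

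For the geometric reduction, fix $g$ and let $\lambda_A,\lambda_B$ on the unit circle attain the infima defining $d_A,d_B$; these exist because $\mathcal{G}f,\mathcal{G}g\in L^2$ and the circle is compact, and if $\lambda_A=\lambda_B$ the reduction is immediate. Abbreviate $F_S:=\|\mathcal{G}f\|_{L^2(S)}$, noting $F_{A\cap B}>0$ since $\lambda_{A,B}(f)>0$. The first key step is phase coherence on the overlap: from the pointwise identity
\begin{equation*}
 (\lambda_B-\lambda_A)\,\mathcal{G}f=\lambda_B\big(\mathcal{G}f-\lambda_A\mathcal{G}g\big)-\lambda_A\big(\mathcal{G}f-\lambda_B\mathcal{G}g\big),
\end{equation*}
restricted to $A\cap B$ (where both approximations are controlled, since $A\cap B\subseteq A$ and $A\cap B\subseteq B$), together with $|\lambda_A|=|\lambda_B|=1$, I obtain
\begin{equation*}
 |\lambda_A-\lambda_B|\,F_{A\cap B}\le d_A+d_B .
\end{equation*}

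The second key step is the choice of the single global phase. By the symmetry of the assertion in $A$ and $B$ we may assume $F_B\le F_A$ and take $\lambda=\lambda_A$ on all of $\Omega$. Decomposing $\Omega=A\sqcup(B\setminus A)$ and using $\mathcal{G}f-\lambda_A\mathcal{G}g=(1-\lambda_A\overline{\lambda_B})\mathcal{G}f+\lambda_A\overline{\lambda_B}(\mathcal{G}f-\lambda_B\mathcal{G}g)$ on $B\setminus A$, I get
\begin{equation*}
 d_\Omega(f,g)^2\le d_A^2+\Big(d_B+|\lambda_A-\lambda_B|\,F_B\Big)^2 .
\end{equation*}
Now the coherence estimate gives $|\lambda_A-\lambda_B|\le(d_A+d_B)/F_{A\cap B}$, while charging the phase defect to the \emph{smaller}-mass region converts the mass ratio into the connectivity constant: $F_B=\min(F_A,F_B)\le\tfrac12(F_A+F_B)=\tfrac12\lambda_{A,B}(f)^{-1}F_{A\cap B}$. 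Hence, with $s:=\tfrac12\lambda_{A,B}(f)^{-1}\ge1$, the inner bracket is at most $d_B+s(d_A+d_B)$.

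It remains to combine these. Substituting and invoking the elementary inequality $u^2+\big(v+s(u+v)\big)^2\le(2s+\sqrt2)^2(u^2+v^2)$ for $u,v\ge0$ and $s\ge1$ completes the reduction, since $2s=\lambda_{A,B}(f)^{-1}$. This last inequality is the positivity of the quadratic form $au^2-2buv+cv^2$ with $a=3s^2+4\sqrt2\,s+1$, $c=3s^2+(4\sqrt2-2)s+1$ and $b=s(1+s)$, which holds because $a>0$ and $ac\ge9s^4\ge s^2(1+s)^2=b^2$ (the last step being $9s^2\ge(1+s)^2$ for $s\ge1$). I expect the phase bookkeeping to be the only subtle point: the coherence identity on $A\cap B$, and crucially the decision to charge the phase mismatch to the region carrying \emph{less} of the mass of $\mathcal{G}f$, which is precisely what collapses the mass ratio to $\tfrac12\lambda_{A,B}(f)^{-1}$ rather than the lossy $\lambda_{A,B}(f)^{-1}$ (the naive choice in fact fails the final quadratic inequality). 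Everything else---the localization of the $L^2$ and weighted Sobolev norms over the cover $\{A,B\}$ and the Cauchy--Schwarz combination---is routine.
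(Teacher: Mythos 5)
Your proof is correct, and it arrives at the paper's exact constant $\left(c_{r,A}(f)^2+c_{r,B}(f)^2\right)^{1/2}\left(\lambda_{A,B}(f)^{-1}+\sqrt{2}\right)$ by a genuinely different route. The shared skeleton is the choice of minimizing phases on the two pieces and the coherence estimate $|\lambda_A-\lambda_B|\,\|\mathcal{G}f\|_{L^2(A\cap B)}\le d_A+d_B$ on the overlap; the paper derives the very same bound when it estimates $\|(\tau_A-\tau_B)\mathcal{G}f\|_{L^2(A\cap B)}$ by adding the two defects. The divergence is in how the single global phase is produced and how the errors are recombined: the paper takes the circle midpoint $\tau_0=\mathrm{avg}_{\mathbb{S}^1}(\tau_A,\tau_B)$, so each side absorbs half the phase defect with the symmetric gain $|\tau_A-\tau_0|=|\tau_B-\tau_0|\le 2^{-1/2}|\tau_A-\tau_B|$, and then recombines the two \emph{overlapping} contributions using the Hilbertian square-additivity $\|h\|^2_{H^1_r(A)}+\|h\|^2_{H^1_r(B)}\le 2\|h\|^2_{H^1_r(\Omega)}$; you instead adopt the one-sided phase $\lambda_A$ from the larger-mass piece, work with the \emph{disjoint} decomposition $\Omega=A\sqcup(B\setminus A)$, convert $\min(F_A,F_B)\le\tfrac12(F_A+F_B)=\tfrac12\lambda_{A,B}(f)^{-1}F_{A\cap B}$, and close with an elementary quadratic-form verification (which, as your own discriminant computation shows, needs only $s\ge 1/2$ --- automatic, since $F_{A\cap B}\le\min(F_A,F_B)$ forces $\lambda_{A,B}(f)\le 1/2$). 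Your modularization buys genuine generality: the purely metric lemma $d_\Omega(f,g)\le\left(\lambda_{A,B}(f)^{-1}+\sqrt{2}\right)\left(d_A(f,g)^2+d_B(f,g)^2\right)^{1/2}$ makes no reference to $H^1_r$ at all, so your gluing transfers verbatim to any measurement norm that is merely monotone under restriction of the domain, whereas the paper's recombination exploits the specific square-additive structure of the $H^1_r$ density over the cover $\{A,B\}$. What the paper's midpoint trick buys in exchange is symmetry (no WLOG on $F_A,F_B$) and brevity (no quadratic form to check). Two cosmetic remarks: the step you label Cauchy--Schwarz is really just domain monotonicity of the norm combined with the definition of the local constants; and your parenthetical observation that charging the full phase defect to one side with the crude bound $F_B\le\lambda_{A,B}(f)^{-1}F_{A\cap B}$ would break the final quadratic inequality is accurate, so the passage through the minimum is genuinely needed to land on this constant rather than a lossier one.
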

\begin{proof}
    Let us denote the complex conjugates of the minimizers of $d_A(f,g)$ and $d_B(f,g)$ by $\tau_A$ and $\tau_B$, respectively. Then, we let
    \begin{equation*}
        \tau_0:=\text{avg}_{\mathbb{S}^1}(\tau_A,\tau_B):=\begin{cases}
    & \frac{\tau_A-\tau_B}{|\tau_A-\tau_B|},\ \text{if} \ \tau_A\neq -\tau_B,
    \\  
    &i\tau_A, \ \ \ \ \ \ \text{otherwise,}
    \end{cases}
    \end{equation*}
and note that 
\begin{equation*}
    | \tau_A-\tau_0|=|\tau_B-\tau_0|\leq 2^{-\frac{1}{2}} |\tau_A-\tau_B|.
\end{equation*}
We now estimate
\begin{equation*}
    \begin{split}
        d_\Omega (f,g)&\leq \|\tau_0 \mathcal{G}f-\mathcal{G}g\|_{L^2(\Omega)}
        \\
        &\leq   \|\tau_0 \mathcal{G}f-\mathcal{G}g\|_{L^2(A)}+ \|\tau_0 \mathcal{G}f-\mathcal{G}g\|_{L^2(B)}
        \\
        &\leq   \|\tau_A \mathcal{G}f-\mathcal{G}g\|_{L^2(A)}+ \|\tau_B \mathcal{G}f-\mathcal{G}g\|_{L^2(B)}+|\tau_A-\tau_0|\|\mathcal{G}f\|_{L^2(A)}+|\tau_B-\tau_0|\|\mathcal{G}f\|_{L^2(B)}
        \\
          &\leq   c_{A,r}(f)\| |\mathcal{G}f|-|\mathcal{G}g|\|_{H^1_r(A)}+ c_{B,r}(f)\| |\mathcal{G}f|-|\mathcal{G}g|\|_{H^1_r(B)}
          \\
          &+2^{-\frac{1}{2}}|\tau_A-\tau_B|(\|\mathcal{G}f\|_{L^2(A)}+\|\mathcal{G}f\|_{L^2(B)})
          \\
          &\leq \left(c_{A,r}(f)^2+c_{B,r}(f)^2\right)^{\frac{1}{2}}\left(\| |\mathcal{G}f|-|\mathcal{G}g|\|_{H^1_r(A)}^2+\| |\mathcal{G}f|-|\mathcal{G}g|\|_{H^1_r(B)}^2\right)^\frac{1}{2}
          \\
          &+2^{-\frac{1}{2}}\lambda_{A,B}(f)^{-1}|\tau_A-\tau_B|\|\mathcal{G}f\|_{L^2(A\cap B)}
          \\
          &\leq 2^\frac{1}{2}\left(c_{A,r}(f)^2+c_{B,r}(f)^2\right)^{\frac{1}{2}}\| |\mathcal{G}f|-|\mathcal{G}g|\|_{H^1_r(\Omega)}+2^{-\frac{1}{2}}\lambda_{A,B}(f)^{-1}\|(\tau_A-\tau_B)\mathcal{G}f\|_{L^2(A\cap B)}.
    \end{split}
\end{equation*}
We finally estimate the second summand by
\begin{equation*}
\begin{split}
    \|(\tau_A-\tau_B)\mathcal{G}f\|_{L^2(A\cap B)}&\leq   \|\tau_A\mathcal{G}f-\mathcal{G}g\|_{L^2(A\cap B)}+  \|\tau_B\mathcal{G}f-\mathcal{G}g\|_{L^2(A\cap B)}
    \\
    &\leq  \|\tau_A\mathcal{G}f-\mathcal{G}g\|_{L^2(A)}+  \|\tau_B\mathcal{G}f-\mathcal{G}g\|_{L^2(B)}
    \\
    &\leq 2^\frac{1}{2}\left(c_{A,r}(f)^2+c_{B,r}(f)^2\right)^\frac{1}{2} \| |\mathcal{G}f|-|\mathcal{G}g|\|_{H^1_r(\Omega)}.
    \end{split}
\end{equation*}
This completes the proof.
\end{proof}
\begin{remark}
    The stability of Gabor phase retrieval can be partially propagated to other windows. Indeed, in \cite{grochenig2020zeros} window functions were constructed for which the ambiguity function does not vanish and the stability of the STFT phase retrieval problem for these windows was studied in \cite{rathmair2024stable}. We remark that a quick explanation of why these windows should do STFT phase retrieval follows immediately from \eqref{Ambiguity to STFT} and the above stability argument for the Gabor phase retrieval problem. Indeed, for any window $\Phi$, we may use \eqref{Ambiguity to STFT}  to deduce that 
\begin{equation*}
\begin{split}
    \mathcal{F}\left( |V_\phi f|^2-|V_\phi g|^2\right)(\omega,-x)&=(\mathcal{A}f(x,\omega)-\mathcal{A}g(x,\omega)) \overline{\mathcal{A}\phi(x,\omega)}
    \\
    &=\overline{\frac{\mathcal{A}\phi(x,\omega)}{\mathcal{A}\Phi(x,\omega)}}(\mathcal{A}f(x,\omega) -\mathcal{A}g(x,\omega))\overline{\mathcal{A}\Phi(x,\omega)}
    \\
    &=\overline{\frac{\mathcal{A}\phi(x,\omega)}{\mathcal{A}\Phi(x,\omega)}}\mathcal{F}\left( |V_\Phi f|^2-|V_\Phi g|^2\right)(\omega,-x),
    \end{split}
\end{equation*}
where $\phi$ is the Gaussian window. Multiplying by $\langle (x,\omega)\rangle $ and applying Plancherel's theorem, one obtains

\begin{equation*}
    \| |V_\phi f|^2-|V_\phi g|^2\|_{H^1}=\|\langle (x,\omega)\rangle\overline{\frac{\mathcal{A}\phi(x,\omega)}{\mathcal{A}\Phi(x,\omega)}}\mathcal{F}\left( |V_\Phi f|^2-|V_\Phi g|^2\right)(\omega,-x)\|_{L^2}.
\end{equation*}
In \cite{grochenig2020zeros}, examples of $\Phi$ are constructed which satisfy the bound $\left|\langle (x,\omega)\rangle\frac{\mathcal{A}\phi(x,\omega)}{\mathcal{A}\Phi(x,\omega)}\right|\lesssim 1$. For such $\Phi$, we obtain 

\begin{equation}\label{No deriv}
    \| |V_\phi f|^2-|V_\phi g|^2\|_{H^1}\lesssim\||V_\Phi f|^2-|V_\Phi g|^2\|_{L^2}.
\end{equation}
Notably, only $L^2$ norms appear on the right-hand side of \eqref{No deriv}. One can now use our above stability results to -- in certain cases such as when $V_\phi f$ is a polynomial -- bound  $$\| |V_\phi f|^2-|V_\phi g|^2\|_{H^1}\ge C(f)\inf_{|\lambda|=1} \| (V_\phi f)^2-\lambda(V_\phi g)^2\|.$$
This gives a form of stability for STFT phase retrieval with window $\Phi$; in particular, if $|V_\Phi f|=|V_\Phi g|\in L^4$ we deduce immediately from the above quantitative estimates that this implies that $f$ is a unimodular multiple of $g$. 
\end{remark}

\subsection*{Acknowledgments}
We thank Francesca Bartolucci for interesting discussions that inspired some of the content in this article.

\bibliographystyle{plain}
\bibliography{sources}

\end{document}